\newtheorem{defn}{Definition}[section]
\newtheorem{thm}[defn]{Theorem}
\newtheorem{lem}[defn]{Lemma}
\newtheorem{prop}[defn]{Proposition}
\newtheorem{cor}[defn]{Corollary}
\newtheorem{ex}[defn]{Example}
\newtheorem{re}[defn]{Remark}
\def\K{{\bf K}}
\def\ad{{{\rm ad}}}
\def\dim{{{\rm dim}}}
\def\ch{{{\rm ch}}}
\begin{document}
\title{{\bf Hom-Nijienhuis operator and $T$*-extension of Hom-Lie Superalgebras}}
\author{\normalsize \bf Yan Liu,  Liangyun Chen, Yao Ma}
\date{{{\small{ School of Mathematics and Statistics,\\
  Northeast Normal University,\\
   Changchun 130024, China
 }}}} \maketitle
\date{}

 {\bf\begin{center}{Abstract}\end{center}}

In this paper, we  study  hom-Lie superalgebras. We give the
definition of hom-Nijienhuis operators of regualr hom-Lie
superalgebras and show that the deformation generated by a
hom-Nijienhuis operator is trivial. Moreover, we introduce the
definition of $T^*$-extensions of  Hom-Lie superalgebras and show
that $T^*$-extensions preserve many properties such as nilpotency,
solvability and decomposition in some sense. We also investigate the
equivalence of $T^*$-extensions.

\noindent\textbf{Keywords:} hom-Lie superalgebra, adjniont representation, $T^*$-extension, quadratic hom-Lie superalgebra,  deformations\\
\textbf{2000 Mathematics Subject Classification:} 17B99, 17B30.
\renewcommand{\thefootnote}{\fnsymbol{footnote}}
\footnote[0]{ Corresponding author(L. Chen): chenly640@nenu.edu.cn.}
\footnote[0]{ Supported by  NNSF of China (No. 11171057),  Natural
Science Foundation of  Jilin province (No. 201115006), Scientific
Research Foundation for Returned Scholars
    Ministry of Education of China and the Fundamental Research Funds for the Central Universities. }

\section{Introduction}

A Hom-Lie algebra is a triple $(L,[\cdot,\cdot]_{L},\alpha)$, where $\alpha$ is a linear self-map, in which the skewsymmetric bracket satisfies an $\alpha$-twisted variant of the Jacobi identity, called the Hom-Jacobi
identity. When $\alpha$ is the identity map, the Hom-Jacobi identity reduces to the usual Jacobi identity, and $L$ is a Lie algebra. The notion of  hom-Lie algebras was introduced by Hartwig, Larsson and Silvestrov to describe the structures on certain deformations of the
Witt algebra and the Virasoro algebra \cite {Hartwig&Larsson}. Hom-Lie algebras are also related to deformed vector fields, the various versions of the Yang-Baxter equations, braid group representations, and quantum groups[7,12,13]. Recently, hom-Lie algebras were generalized to hom-Lie superalgebras by Ammar and Makhlouf [1-3].  More applications of the hom-Lie algebras, hom-algebras and hom-Lie superalgebras can be found in [6-8, 11-13]

In this paper we introduce an extensional technique called
$T^*$-extensional for hom-Lie superalgebra. This method is a
one-step procedure, so it is usually workable. In 1997, Bordemann
introduced the notion of $T^*$-extension of Lie algebras(see
\cite{Bordemann}), which is one of the main tools to prove that
every symplectic  Manin quadratic Lie algebra is a special
symplectic algebra in \cite{Bajo&Benayadi&Medina}. Many facts show
that $T^*$-extension is an important method to study algebraic
structures [4, 5, 9]

The paper proceeds as follows. In Section 2 after giving the definition of hom-Lie superalgebras,  we show that the direct sum of two hom-Lie superalgebras
is still a hom-Lie superalgebra. A linear map between hom-Lie superalgebras  is a morphism if and only if its graph is a hom-Lie sub-superalgebra. Section 3
we give the definition of hom-Nijienhuis operators of regualr hom-Lie superalgebras. We show that the deformation generated by a hom-Nijienhuis operator is
trivial. Section 4 after presents a summary of some of the relevant concepts, we introduces the definition of $T^*$-extension of hom-Lie superalgebra and
shows that $T^*$-extension preserves many properties such as nilpotency, solvability and decomposition in some sense. Section 4 discusses the equivalence
of $T^*$-extensions using cohomology.

\section{Preliminaries}
\begin{defn}
(1) A hom-Lie superalgebra is a triple  $(L,[\cdot,\cdot]_{L},\alpha)$ consisting of a $Z_2$-graded vector space $L$, an even bilinear map (bracket)
$[\cdot,\cdot]_L:{\wedge}^2L\rightarrow L$ and an even homomorphism $\alpha:L\rightarrow L$ satisfying
\begin{equation}[x,y]_{L}=-(-1)^{|x||y|}[y,x]_{L},\end{equation}
\begin{equation}(-1)^{|x||z|}[\alpha(x),[y,z]_{L}]_{L}+(-1)^{|y||x|}[\alpha(y),[z,x]_{L}]_{L}+(-1)^{|z||y|}[\alpha(z),[x,y]_{L}]_{L}=0,
\end{equation}
where $x,y$ and $z$ are homogeneous elements in $L$;

(2) A hom-Lie superalgebra is called a multiplicative hom-lie superalgebra if $\alpha$ is an algebraic morphism, i.e. for any $x, y, \in L$, we have
 $\alpha([x,y]_L)=[\alpha(x),\alpha(y)]_L;$

(3) A hom-Lie superalgebra is called a regular hom-lie superalgebra if $\alpha$ is an algebra automorphism;

(4) The center of hom-Lie superalgebras $L$, denoted by $Z(L)$, is the set of such elements $x\in L$ satisfying $[x,L]=0$;

(5) A sub-vector space $I\subset L$ is a hom-Lie subsuperalgebra  of $(L,[\cdot,\cdot]_{L},\alpha)$  if $\alpha(I)\subset I$  and $I$ is closed under
 the bracket operation  $[\cdot,\cdot]_L$, i.e.
$[I,I]_L\in I$;

(6) $I$ is called a hom-Lie superalgebra ideal of $L$  if
$[I,L]_L\subset I$. Moreover, if $[I,I]=0$, then $I$ is called an
abelian hom-Lie superalgebra ideal of $L$.
 Note that a subspace $L^{'}$ of $L$ has a  natural $Z_{2}$-gradation: $L^{'}=(L^{'}\cap L_{\bar{0}})\oplus(L^{'}\cap L_{\bar{1}})$.
\end{defn}

In the sequel, $u+v\in L\oplus \Gamma$ implies that $u\in L$, $v\in \Gamma$ and $u+v$ is homogeneous satisfying $|u+v|=|u|=|v|$, then $(-1)^{|u_1+v_1||u_2+v_2|}=(-1)^{|u_1||u_2|}$.
\begin{prop}\label{proposition2.1}
Given two hom-Lie superalgebras $(L,[\cdot,\cdot]_L,\alpha)$ and $(\Gamma,[\cdot,\cdot]_\Gamma,\beta)$, there is a  hom-Lie superalgebra
$(L\oplus\Gamma,[\cdot,\cdot]_{L\oplus\Gamma},\alpha+\beta)$, where the bilinear map $[\cdot,\cdot]_{L\oplus\Gamma}:{\wedge}^2(L\oplus\Gamma)\rightarrow L\oplus\Gamma$ is given by
$${[u_1+v_1,u_2+v_2]}_{L\oplus\Gamma}={[u_1,u_2]}_L+{[v_1,v_2]}_\Gamma, \qquad \forall  u_1,u_2\in L,  v_1,v_2 \in \Gamma, $$
and the linear map $(\alpha+\beta):L\oplus\Gamma \rightarrow L\oplus\Gamma$ is given by
$$(\alpha+\beta)(u+v)=\alpha(u)+\beta(v), \qquad \forall  u\in L, v\in \Gamma.$$
 \end{prop}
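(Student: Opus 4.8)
The plan is to verify directly that the triple $(L\oplus\Gamma,[\cdot,\cdot]_{L\oplus\Gamma},\alpha+\beta)$ satisfies every condition in the definition of a hom-Lie superalgebra. Equip $L\oplus\Gamma$ with the natural $Z_2$-gradation $(L\oplus\Gamma)_{\bar i}=L_{\bar i}\oplus\Gamma_{\bar i}$ for $i\in\{0,1\}$. Since $\alpha$ and $\beta$ are both even, $\alpha+\beta$ preserves this gradation and is therefore an even homomorphism; bilinearity and evenness of $[\cdot,\cdot]_{L\oplus\Gamma}$ are immediate from the componentwise definition together with the corresponding properties of $[\cdot,\cdot]_L$ and $[\cdot,\cdot]_\Gamma$. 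Throughout I would use the sign convention recorded in the remark preceding the statement, namely that for homogeneous elements $|u+v|=|u|=|v|$, so that $(-1)^{|u_1+v_1||u_2+v_2|}=(-1)^{|u_1||u_2|}=(-1)^{|v_1||v_2|}$.

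Next I would check super-skewsymmetry $(2.1)$. Writing $x=u_1+v_1$ and $y=u_2+v_2$, I expand $[x,y]_{L\oplus\Gamma}=[u_1,u_2]_L+[v_1,v_2]_\Gamma$ and apply $(2.1)$ in each of $L$ and $\Gamma$ separately. The two resulting sign factors $(-1)^{|u_1||u_2|}$ and $(-1)^{|v_1||v_2|}$ coincide by the convention above, so they may be pulled out in front of $[u_2,u_1]_L+[v_2,v_1]_\Gamma=[y,x]_{L\oplus\Gamma}$, giving $[x,y]_{L\oplus\Gamma}=-(-1)^{|x||y|}[y,x]_{L\oplus\Gamma}$ as required.

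The substantive step is the Hom-Jacobi identity $(2.2)$. Take three homogeneous elements $x=u_1+v_1$, $y=u_2+v_2$, $z=u_3+v_3$. Using the componentwise formulas for the bracket and for $\alpha+\beta$, one computes $[(\alpha+\beta)(x),[y,z]_{L\oplus\Gamma}]_{L\oplus\Gamma}=[\alpha(u_1),[u_2,u_3]_L]_L+[\beta(v_1),[v_2,v_3]_\Gamma]_\Gamma$, and similarly for the other two cyclic terms. The key observation is that the entire cyclic sum therefore splits as a sum of an $L$-valued part and a $\Gamma$-valued part, with no mixed terms. After replacing each prefactor $(-1)^{|x||z|}$, $(-1)^{|y||x|}$, $(-1)^{|z||y|}$ by its $L$- (resp. $\Gamma$-) counterpart via the convention, the $L$-valued part is precisely the left-hand side of $(2.2)$ for $L$ and the $\Gamma$-valued part is precisely the left-hand side of $(2.2)$ for $\Gamma$; both vanish by hypothesis, so the cyclic sum is zero.

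The only real obstacle is the sign bookkeeping: one must confirm that the twisting factors attached to the three cyclic summands restrict correctly to the corresponding factors in $L$ and in $\Gamma$. This is exactly what the homogeneity convention $|u+v|=|u|=|v|$ guarantees, so once that convention is invoked the verification reduces to copying the two given Hom-Jacobi identities and assembling them.
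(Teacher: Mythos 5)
Your proposal is correct and follows essentially the same route as the paper's own proof: both verify super-skewsymmetry and the Hom-Jacobi identity by direct componentwise computation, using the convention $|u+v|=|u|=|v|$ to identify the sign factors and splitting the cyclic sum into an $L$-valued and a $\Gamma$-valued part, each vanishing by the hypothesis on $L$ and $\Gamma$ respectively. Your write-up is in fact slightly more careful than the paper's, since you explicitly note the gradation of $L\oplus\Gamma$ and the evenness of $\alpha+\beta$, which the paper leaves implicit.
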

\begin{proof}
 for any  $u_i\in L,  v_i\in \Gamma,$ we have
$${[u_1+v_1,u_2+v_2]}_{L\oplus\Gamma}={[u_1,u_2]}_L+{[v_1,v_2]}_\Gamma,$$
\begin{eqnarray*}
-(-1)^{|u_1||u_2|}{[u_2+v_2,u_1+v_1]}_{L\oplus\Gamma}&=&-(-1)^{|u_1||u_2|}({[u_2,u_1]}_L+{[v_2,v_1]}_\Gamma)\\
&=&{[u_1,u_2]}_L+{[v_1,v_2]}_\Gamma,
\end{eqnarray*}
The bracket is obviously supersymmetric and with a direct computation we have
\begin{eqnarray*}
&&(-1)^{|u_{1}||u_{3}|}[(\alpha+\beta)(u_1+v_1),[u_2+v_2,u_3+v_3]_{L\oplus\Gamma}]_{L\oplus\Gamma}+c.p.((u_1+v_1),(u_2+v_2),(u_3+v_3))\\
&=&(-1)^{|u_{1}||u_{3}|}[\alpha(u_1)+\beta(v_1),[u_2,u_3]_{L}+[v_2,v_3]_{\Gamma}]_{L\oplus\Gamma}+c.p.\\
&=&(-1)^{|u_{1}||u_{3}|}[\alpha(u_1),[u_2,u_3]_L]_L+c.p.(u_1,u_2,u_3)+(-1)^{|v_{1}||v_{3}|}[\beta(v_1),[v_2,v_3]_\Gamma]_\Gamma+c.p.(v_1,v_2,v_3)\\
&=&0.
\end{eqnarray*}
where c.p.(a,b,c) means the cyclic permutations of a, b, c.
\end{proof}
\begin{defn}Let $(L,[\cdot,\cdot]_L,\alpha)$ and $(\Gamma,[\cdot,\cdot]_\Gamma,\beta)$ be two hom-Lie superalgebras. An even homomorphism $\phi:L \rightarrow \Gamma$ is said to be a morphism of Hom-lie superalgebras if \\
\begin{equation} \phi[u,v]_L=[\phi(u),\phi(v)]_\Gamma,\qquad\forall  u,v\in L,\end{equation}
\begin{equation}\phi\circ\alpha=\alpha\circ\phi.\qquad\qquad\qquad\qquad\end{equation}

 Denote by $\mathfrak{G}_\phi\in L\oplus\Gamma$ the graph of a linear map $\phi:L \rightarrow \Gamma$.
\end{defn}
\begin{prop}\label{proposition2.1}
An even homomorphism $\phi:(L,[\cdot,\cdot]_L,\alpha)\rightarrow(\Gamma,[\cdot,\cdot]_\Gamma,\beta)$ is a morphism of hom-Lie superalgebras if
and only if the graph $\mathfrak{G}_\phi\in L\oplus\Gamma$ is a hom-Lie sub-superalgebra of  $(L\oplus\Gamma,[\cdot,\cdot]_{L\oplus\Gamma},\alpha+\beta)$.
\end{prop}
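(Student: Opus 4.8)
The plan is to prove the biconditional by exploiting the defining rigidity of the graph: an element of $\mathfrak{G}_\phi$ is precisely one of the form $u+\phi(u)$ with $u\in L$, so its $\Gamma$-component is completely determined by its $L$-component. Since $\phi$ is even, $u+\phi(u)$ is homogeneous whenever $u$ is, so $\mathfrak{G}_\phi$ inherits the $Z_2$-gradation and the notion of sub-superalgebra makes sense. The two sub-superalgebra conditions — closure under the bracket and stability under $\alpha+\beta$ — will correspond exactly to the two morphism conditions $(3)$ and $(4)$, and the whole argument is a componentwise comparison using the direct-sum bracket and twist map introduced above.

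First I would treat the forward direction. Assuming $\phi$ is a morphism, I take two generic elements $u_1+\phi(u_1)$ and $u_2+\phi(u_2)$ of $\mathfrak{G}_\phi$ and compute their bracket with the direct-sum formula:
$$[u_1+\phi(u_1),u_2+\phi(u_2)]_{L\oplus\Gamma}=[u_1,u_2]_L+[\phi(u_1),\phi(u_2)]_\Gamma.$$
By the multiplicativity condition $(3)$, the second summand equals $\phi([u_1,u_2]_L)$, so the bracket is $[u_1,u_2]_L+\phi([u_1,u_2]_L)\in\mathfrak{G}_\phi$, giving closure. For stability I apply $\alpha+\beta$ to $u+\phi(u)$, obtaining $\alpha(u)+\beta(\phi(u))$; the intertwining condition $(4)$, read as $\phi\circ\alpha=\beta\circ\phi$, rewrites $\beta(\phi(u))$ as $\phi(\alpha(u))$, so the image is $\alpha(u)+\phi(\alpha(u))\in\mathfrak{G}_\phi$. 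Hence $(\alpha+\beta)(\mathfrak{G}_\phi)\subset\mathfrak{G}_\phi$ and $\mathfrak{G}_\phi$ is a hom-Lie sub-superalgebra.

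Conversely, I would assume $\mathfrak{G}_\phi$ is a hom-Lie sub-superalgebra and read the two morphism conditions off the same computations, reversed. Closure forces $[u_1,u_2]_L+[\phi(u_1),\phi(u_2)]_\Gamma$ to lie in $\mathfrak{G}_\phi$; since every graph element has its $\Gamma$-part equal to $\phi$ of its $L$-part, comparing components gives $[\phi(u_1),\phi(u_2)]_\Gamma=\phi([u_1,u_2]_L)$, which is $(3)$. Likewise, $(\alpha+\beta)$-stability forces $\alpha(u)+\beta(\phi(u))\in\mathfrak{G}_\phi$, and matching the $\Gamma$-component against $\phi(\alpha(u))$ yields $\beta(\phi(u))=\phi(\alpha(u))$, which is $(4)$.

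I do not anticipate a genuine obstacle, as the argument is purely a matching of components in $L\oplus\Gamma$. The one place demanding a little care is the reverse direction, where one must invoke the precise characterization of graph membership — that lying in $\mathfrak{G}_\phi$ pins the $\Gamma$-coordinate down to $\phi$ of the $L$-coordinate — rather than merely asserting the existence of some preimage under the projection to $L$. It is exactly this uniqueness that converts the set-theoretic containments into the equalities $(3)$ and $(4)$.
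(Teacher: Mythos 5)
Your proof is correct and takes essentially the same route as the paper: both directions compute $[u+\phi(u),v+\phi(v)]_{L\oplus\Gamma}=[u,v]_L+[\phi(u),\phi(v)]_\Gamma$ and $(\alpha+\beta)(u+\phi(u))=\alpha(u)+\beta(\phi(u))$ componentwise, and translate closure under the bracket and stability under $\alpha+\beta$ into conditions $(3)$ and $(4)$ respectively. Your explicit observation that graph membership pins the $\Gamma$-component down to $\phi$ of the $L$-component is precisely the step the paper uses (implicitly) in the converse, so there is no substantive difference.
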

\begin{proof}
Let
$\phi:(L,[\cdot,\cdot]_L,\alpha)\rightarrow(\Gamma,[\cdot,\cdot]_\Gamma,\beta)$
be a morphism of hom-Lie superalgebras, then for any $u,v\in L$, we have
$$[u+\phi(u),v+\phi(v)]_{L\oplus\Gamma}=[u,v]_{L}+[\phi(u),\phi(v)]_{\Gamma}=[u,v]_{L}+\phi[u,v]_L.$$
Thus the graph $\mathfrak{G}_\phi$ is closed under the bracket operation $[\cdot,\cdot]_{L\oplus\Gamma}$. Furthermore, by (4), we have
$$(\alpha+\beta)(u+\phi(u))=\alpha(u)+\beta\circ\phi(u)=\alpha(u)+\phi\circ\alpha(u)$$
which implies that $(\alpha+\beta)(\mathfrak{G}_\phi)\subset \mathfrak{G}_\phi$. Thus $\mathfrak{G}_\phi$ is a hom-Lie sub-superalgebra of $(L\oplus\Gamma,[\cdot,\cdot]_{L\oplus\Gamma},\alpha+\beta)$.

Conversely, if the graph $\mathfrak{G}_\phi\subset L\oplus\Gamma$ is a hom-Lie sub-superalgebra of
$(L\oplus\Gamma,[\cdot,\cdot]_{L\oplus\Gamma},\alpha+\beta)$, then we have
$$[u+\phi(u),v+\phi(v)]_{L\oplus\Gamma}=[u,v]_{L}+[\phi(u),\phi(v)]_{\Gamma}\in\mathfrak{G}_\phi,$$
which implies that $$[\phi(u),\phi(v)]_{\Gamma}=\phi[u,v]_{L}.$$
Furthermore, $(\alpha+\beta)(\mathfrak{G}_{\phi})\subset \mathfrak{G}_{\phi}$ yields that
$$(\alpha+\beta)(u+\phi(u))=\alpha(u)+\beta\circ\phi(u)\in\mathfrak{G}_\phi,$$
which is equivalent to the condition $\beta\circ\phi(u)=\phi\circ\alpha(u)$, i.e. $\beta\circ\phi=\phi\circ\alpha.$ Therefore, $\mathfrak{G}_{\phi}$ is a morphism of hom-Lie superalgebras.
\end{proof}

\section{The hom-Nijienhuis operator of hom-Lie superalgebras}
Let $(L,[\cdot,\cdot]_{L},\alpha)$ be a multiplicative hom-Lie superalgebra. We consider that L represents on itself via the bracket with respect to the morphism $\alpha$.
\begin{defn}\cite{Faouzi&Abdenacer}
For any integer s, the $\alpha^{s}$-adjoint representation of the multiplicative  hom-Lie superalgebra $(L,[\cdot,\cdot]_{L},\alpha)$, which we denote by $ad_{s}$, is defined by
$$\ad_{s}(u)(v)=[\alpha^{s}(u),v]_{L},\forall u,v\in L$$
In particular we use $\ad$ represent $\ad_{0}$.
\end{defn}
\begin{lem}\cite{Faouzi&Abdenacer}
With the above notations, we have
$$\ad_{s}(\alpha(u))\circ \alpha=\alpha\circ \ad_{s}(u);$$
$$\ad_{s}([u,v]_{L})\circ \alpha=\ad_{s}(\alpha(u))\circ \ad_{s}(v)-(-1)^{|u||v|}\ad_{s}(\alpha(v))\circ \ad_{s}(u).$$
Thus the definition of $\alpha^{s}$-adjoint representation is well defined.
\end{lem}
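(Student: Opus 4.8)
The plan is to verify both identities by evaluating each side on an arbitrary homogeneous element $w\in L$ and then reducing everything to the defining axioms of the hom-Lie superalgebra. For the first identity I would compute $\ad_s(\alpha(u))(\alpha(w))=[\alpha^{s+1}(u),\alpha(w)]_L$ and compare it with $\alpha(\ad_s(u)(w))=\alpha([\alpha^s(u),w]_L)$; since $\alpha$ is an algebra morphism, the latter equals $[\alpha^{s+1}(u),\alpha(w)]_L$, so the two sides coincide at once. This step is routine and uses nothing beyond multiplicativity of $\alpha$.

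For the second identity the key is to unwind both sides and recognize the Hom-Jacobi identity. Evaluating the left-hand side on $w$ and using multiplicativity to write $\alpha^s([u,v]_L)=[\alpha^s(u),\alpha^s(v)]_L$ gives $[[\alpha^s(u),\alpha^s(v)]_L,\alpha(w)]_L$. Evaluating the right-hand side on $w$ gives $[\alpha^{s+1}(u),[\alpha^s(v),w]_L]_L-(-1)^{|u||v|}[\alpha^{s+1}(v),[\alpha^s(u),w]_L]_L$. The target reduces therefore to the single relation
$$[[\alpha^s(u),\alpha^s(v)]_L,\alpha(w)]_L=[\alpha^{s+1}(u),[\alpha^s(v),w]_L]_L-(-1)^{|u||v|}[\alpha^{s+1}(v),[\alpha^s(u),w]_L]_L.$$

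To establish it, I would substitute $x=\alpha^s(u)$, $y=\alpha^s(v)$, $z=w$ into the Hom-Jacobi identity (2), noting that $|x|=|u|$, $|y|=|v|$, $|z|=|w|$ and that $\alpha(x)=\alpha^{s+1}(u)$, $\alpha(y)=\alpha^{s+1}(v)$, $\alpha(z)=\alpha(w)$. The resulting three-term cyclic sum contains the brackets $[w,\alpha^s(u)]_L$ and $[\alpha(w),[\alpha^s(u),\alpha^s(v)]_L]_L$, which I would reorient using supersymmetry (1) so that their arguments match the bracketed terms above. After collecting the sign prefactors and multiplying through by $(-1)^{|u||w|}$, the three terms rearrange exactly into the desired relation, giving the second identity.

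The main obstacle is the bookkeeping of the $Z_2$-graded signs: each application of supersymmetry introduces a factor depending on the degrees of the two arguments, and these must combine correctly with the prefactors $(-1)^{|x||z|}$, $(-1)^{|y||x|}$, $(-1)^{|z||y|}$ coming from the Hom-Jacobi identity so that the surviving factor on the second summand is precisely $(-1)^{|u||v|}$. Once the signs are controlled the two identities follow formally, and since they show that $\ad_s$ intertwines the bracket with $\alpha$ in the manner required of a representation, the $\alpha^s$-adjoint representation is well defined.
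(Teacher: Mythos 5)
Your proof is correct and follows the standard route: the paper itself imports this lemma from the cited reference without reproving it, and your verification --- evaluating both sides on a homogeneous $w$, using multiplicativity of $\alpha$ (hence $\alpha^{s}([u,v]_{L})=[\alpha^{s}(u),\alpha^{s}(v)]_{L}$, with regularity of $\alpha$ implicitly needed when $s<0$), and reorienting two terms of the Hom-Jacobi identity by supersymmetry --- is exactly the expected argument. The sign bookkeeping does check out: the third Jacobi term carries $(-1)^{|v||w|}\cdot(-1)^{|w|(|u|+|v|)}=(-1)^{|u||w|}$ after flipping $[\alpha(w),[\alpha^{s}(u),\alpha^{s}(v)]_{L}]_{L}$, so multiplying the whole identity by $(-1)^{|u||w|}$ leaves precisely the factor $(-1)^{|u||v|}$ on the second summand, as you claimed.
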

The set of k-hom-cochains on L with coefficients in L,  which we denote by $C^{k}_{\alpha}(L;L)$ is given by
$$C^{k}_{\alpha}(L;L)=\{f\in C^{k}(L;L)|\alpha \circ f=f\circ \alpha\}.$$
In particular,  the set of 0-hom-cochains are given by:
$$C^{0}_{\alpha}(L;L)=\{u\in L|\alpha(u)=u\}.$$

Associated to the $\alpha ^{s}$-adjoint representation,  the coboundary operator $d_s:C_{\alpha}^{k}(L;L)\rightarrow C_{\alpha}^{k+1}(L;L)$ is given by
\begin{eqnarray*}
d_{s}f(u_{0}, \cdots, u_{k})&=&\sum_{i=0}^k(-1)^{i}(-1)^{(|f|+|u_{0}|+\cdots +|u_{i-1}|)|u_{i}|}[\alpha^{k+s}(u_{i}),f(u_{0}, \cdots, \hat{{u_{i}}} \cdots, u_{k})]_{L}\\
&+&\sum_{i<j}(-1)^{i+j}(-1)^{(|u_{0}|+\cdots +|u_{i-1}|)|u_{i}|}(-1)^{(|u_{0}|+\cdots +|u_{j-1}|)|u_{j}|}(-1)^{|u_{i}||u_{j}|}\\
&&f([u_{i}, u_{j}], \alpha(u_{0}), \cdots, \hat{\alpha(u_{i})}, \cdots, \hat{\alpha(u_{i})}, \cdots, \alpha(u_{k}))
\end{eqnarray*}
For the $\alpha^{s}$-adjoint representation $\ad_{s}$,  we obtain the $\alpha ^{s}$-adjoint complex $(C_{\alpha}^{\bullet}(L;L), d_{s})$ and the corresponding cohomology
$$H^{k}(L;\ad_{s})=Z^{k}(L;\ad_{s})/B^{k}(L;\ad_{s}).$$
Let $\psi\in C_{\alpha}^{2}(L;L)$  be a bilinear operator commuting whit $\alpha$. Consider a t-parametrized family of bilinear operations
\begin{eqnarray}
[u, v]_{t}=[u, v]_{L}+t\psi(u, v).
\end{eqnarray}
Since $\psi$ commutes with $\alpha$,  $\alpha$ is a morphism with respect to the bracket $[\cdot, \cdot]_{t}$ for every $t$. If all the brackets $[\cdot, \cdot]_{t}$ endow $(L, [\cdot, \cdot]_{t}, \alpha)$ regular hom-Lie superalgebra structures,  we say that $\psi$ generates a deformation of the regular hom-Lie superalgebra $(L, [\cdot, \cdot]_{L}, \alpha)$. By computing the hom-superJacobi identity of $[\cdot, \cdot]_{t}$, this is equivalent to the conditions
\begin{eqnarray}(-1)^{|u||w|}(\psi(\alpha(u), \psi[v, w]))+c.p.(u, v, w)=0;\\
(-1)^{|u||w|}(\psi(\alpha(u), [v, w]_{L})+[\alpha(u), \psi[v, w]_{L}]_{L})+c.p.(u, v, w)=0.
\end{eqnarray}
Obviously,   (6) means that $\psi$ must itself define a hom-Lie superalgebra structure on L. Furthermore,  (7) means that $\psi$ is closed with respect to the $\alpha^{-1}$-adjoint representation $\ad_{-1}$,  i.e. $d_{-1}\psi=0$.

A deformation is  said to be trivial if there is a linear operator $N\in C_{\alpha}^{1}(L;L)$ such that for $T_{t}=Id+tN$,  there holds
\begin{eqnarray}
T_{t}[u, v]_{t}=[T_{t}(u), T_{t}(v)]_{L}.
\end{eqnarray}
\begin{defn}\cite {Sheng&Yunhe}
A linear operator $N\in  C_{\alpha}^{1}(L, L)$ is called a hom-Nijienhuis operator if we have
\begin{eqnarray}
[Nu, Nv]_{L}=N[u, v]_{N},
\end{eqnarray}
where the bracket $[\cdot, \cdot]_{N}$ is defined by
\begin{eqnarray}
[u, v]_{N}\triangleq[Nu, v]_{L}+[u, Nv]_{L}-N[u, v]_{L}.
\end{eqnarray}
\end{defn}
\begin{thm}
Let $N\in C_{\alpha}^{1}(L, L)$ be a hom-Nijienhuis operator. Then a deformation of the regular hom-Lie superalgebra $(L, [\cdot, \cdot]_{L}, \alpha)$ can be obtained by putting
$$\psi(u, v)=d_{-1}N(u, v)=[u, v]_{N}.$$
Furthermore,  this deformation is trivial.
\end{thm}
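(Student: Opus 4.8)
The plan is to proceed in three moves: identify $\psi$ explicitly, establish the intertwining identity $T_t[u,v]_t=[T_t(u),T_t(v)]_L$ (which yields triviality at once), and then use that identity together with the fact that $(L,[\cdot,\cdot]_L,\alpha)$ is already a hom-Lie superalgebra to transport the hom-super-Jacobi identity onto $[\cdot,\cdot]_t$.

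First I would verify the stated equality $\psi(u,v)=d_{-1}N(u,v)=[u,v]_N$ by unwinding the coboundary formula. Setting $k=1$ and $s=-1$ gives $\alpha^{k+s}=\alpha^{0}=\mathrm{Id}$, and since $N$ is even the first sum reads $[u_0,N(u_1)]_L-(-1)^{|u_0||u_1|}[u_1,N(u_0)]_L$ while the single term of the second sum is $-N[u_0,u_1]_L$. Using supersymmetry (1) and $|N(u_0)|=|u_0|$ to rewrite $-(-1)^{|u_0||u_1|}[u_1,N(u_0)]_L=[Nu_0,u_1]_L$, this collapses to $[Nu_0,u_1]_L+[u_0,Nu_1]_L-N[u_0,u_1]_L$, which is $[u_0,u_1]_N$ by (10). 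Hence $\psi=[\cdot,\cdot]_N$ and the family is $[u,v]_t=[u,v]_L+t[u,v]_N$.

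Next I would prove the triviality identity (8) with $T_t=\mathrm{Id}+tN$; this is the heart of the matter and does most of the work for both assertions. Expanding each side as a polynomial in $t$, the constant terms agree; the coefficient of $t$ on the left is $[u,v]_N+N[u,v]_L=[Nu,v]_L+[u,Nv]_L$, matching the right-hand coefficient $[Nu,v]_L+[u,Nv]_L$; and the coefficient of $t^2$ is $N[u,v]_N$ on the left and $[Nu,Nv]_L$ on the right, which coincide precisely by the hom-Nijienhuis condition (9). Thus $T_t[u,v]_t=[T_t(u),T_t(v)]_L$, which is (8), so the deformation is trivial. I expect the only real obstacle to be the Koszul-sign bookkeeping here and in the coboundary computation; the sole non-formal input is the matching of the $t^2$-terms, where (9) is indispensable.

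Finally, to see that $\psi$ generates a deformation — that is, that $[\cdot,\cdot]_t$ satisfies the hom-super-Jacobi identity for all $t$, equivalently that (6) and (7) hold — I would transport the Jacobiator through $T_t$. Write $\Phi_t(u,v,w)=(-1)^{|u||w|}[\alpha(u),[v,w]_t]_t+c.p.$ for the hom-super-Jacobiator of $[\cdot,\cdot]_t$. Since $N\in C^1_\alpha(L,L)$ commutes with $\alpha$, so does $T_t$; applying $T_t$ and using the identity just proved twice gives $T_t\Phi_t(u,v,w)=(-1)^{|u||w|}[\alpha(T_tu),[T_tv,T_tw]_L]_L+c.p.$, which is the hom-super-Jacobiator of $(L,[\cdot,\cdot]_L,\alpha)$ at $(T_tu,T_tv,T_tw)$ and so vanishes by (2). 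Hence $T_t\Phi_t\equiv 0$ as a polynomial in $t$. Writing $\Phi_t=t\Phi^{(1)}+t^2\Phi^{(2)}$ (the $t^0$-part being the already-vanishing Jacobiator of $[\cdot,\cdot]_L$) and $T_t=\mathrm{Id}+tN$, comparing the $t$- and $t^2$-coefficients of $T_t\Phi_t$ forces $\Phi^{(1)}=0$ and then $\Phi^{(2)}=0$; these are precisely conditions (7) and (6). Therefore $\Phi_t\equiv 0$, each $[\cdot,\cdot]_t$ is a regular hom-Lie superalgebra bracket, and $\psi$ generates a deformation. (Condition (7), namely $d_{-1}\psi=0$, can also be read off immediately from $\psi=d_{-1}N$ and $d_{-1}\circ d_{-1}=0$.)
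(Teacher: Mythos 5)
Your proposal is correct, and while the triviality half coincides with the paper's argument, your verification that $\psi$ generates a deformation takes a genuinely different route. Your identification $\psi=d_{-1}N=[\cdot,\cdot]_{N}$ and your triviality computation (matching the $t$- and $t^{2}$-coefficients of $T_{t}[u,v]_{t}$ against $[T_{t}u,T_{t}v]_{L}$, with the hom-Nijienhuis condition (9) supplying the $t^{2}$-match) are exactly what the paper does at the end of its proof, and your parenthetical remark that $d_{-1}\psi=0$ follows from $d_{-1}\circ d_{-1}=0$ is the paper's opening line. The divergence is in checking the hom-superJacobi identity (6): the paper expands $(-1)^{|u||w|}\psi(\alpha(u),\psi(v,w))+c.p.$ into nine $N$-terms per cyclic summand and regroups them using (9), (10) and the hom-superJacobi identity of $L$ --- a page of sign-heavy bookkeeping --- whereas you first prove the intertwining identity $T_{t}[u,v]_{t}=[T_{t}(u),T_{t}(v)]_{L}$ (which holds unconditionally, coefficient by coefficient, so there is no circularity in using it before Jacobi is known) and then transport the Jacobiator: since $N$ is even and commutes with $\alpha$, $T_{t}\Phi_{t}(u,v,w)=(-1)^{|u||w|}[\alpha(T_{t}u),[T_{t}v,T_{t}w]_{L}]_{L}+c.p.=0$, and expanding $T_{t}\Phi_{t}=t\Phi^{(1)}+t^{2}(N\Phi^{(1)}+\Phi^{(2)})+t^{3}N\Phi^{(2)}$ yields $\Phi^{(1)}=0$ (condition (7)) and then $\Phi^{(2)}=0$ (condition (6)). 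Your version buys an essentially sign-free, conceptual argument that exhibits triviality as the source of integrability rather than a separate afterthought; the paper's direct computation, in exchange, stays at the level of bracket identities and never treats $t$ as anything but a scalar. The one point you should make explicit: extracting coefficients from $T_{t}\Phi_{t}\equiv 0$ requires either regarding $t$ as a formal parameter --- legitimate here, because your triviality identity was established coefficientwise and extends $\K[t]$-bilinearly to the $t$-dependent inner argument $[v,w]_{t}$ --- or assuming the field $\K$ is infinite, since over a small finite field vanishing for all scalar values of $t$ would not force the polynomial coefficients to vanish.
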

\begin{proof}
Since $\psi=d_{-1}N$,  $d_{-1}\psi=0$ is valid. To see that $\psi$ generates a deformation,  we need to check the hom-superJacobi identity for $\psi$. Using the explicit expression of  $\psi$,  we have
\begin{eqnarray*}&&(-1)^{|u||w|}\psi(\alpha(u), \psi(v, w))+c.p.(u, v, w)\\
&=&(-1)^{|u||w|}[\alpha(u), [v, w]_{N}]_{N}+c.p.(u, v, w)\\
&=&(-1)^{|u||w|}([\alpha(u), [Nv, w]_{L}+[v, Nw]_{L}-N[v, w]_{L}]_{N})+c.p.(u, v, w)\\
&=&(-1)^{|u||w|}([\alpha(u), [Nv, w]_{L}]_{N}+[\alpha(u), [v, Nw]_{L}]_{N}-[\alpha(u), N[v, w]_{L}]_{N})+c.p.(u, v, w)\\
&=&(-1)^{|u||w|}([N\alpha(u), [Nv, w]_{L}]_{L}+[\alpha(u), N[Nv, w]_{L}]_{L}-N[\alpha(u), [Nv, w]_{L}]_{L}\\
&&+[N\alpha(u), [v, Nw]_{L}]_{L}+[\alpha(u), N[v, Nw]_{L}]_{L}-N[\alpha(u), [v, Nw]_{L}]_{L}\\
&&-[N\alpha(u), N[v, w]_{L}]_{L}-[\alpha(u), N^{2}[v, w]_{L}]_{L}+N[\alpha(u), N[v, w]_{L}]_{L})+c.p.(u, v, w).
\end{eqnarray*}
Since
\begin{eqnarray*}&&[\alpha(u), N[Nv, w]_{L}]_{L}+[\alpha(u), N[v, Nw]_{L}]_{L}-[\alpha(u), N^{2}[v, w]_{L}]_{L}\\
&=&[\alpha(u), N([Nv, w]_{L}+[v, Nw]_{L}-N[v, w]_{L})]_{L}\\
&=&[\alpha(u), N[v, w]_{N}]_{L}.
\end{eqnarray*}
Therefore,  we have
\begin{eqnarray*}&&(-1)^{|u||w|}\psi(\alpha(u), \psi(v, w))+c.p.(u, v, w)\\
&=&(-1)^{|u||w|}([N\alpha(u), [Nv, w]_{L}]_{L}-N[\alpha(u), [Nv, w]_{L}]_{L}+[N\alpha(u), [v, Nw]_{L}]_{L}\\
&&-N[\alpha(u), [v, Nw]_{L}]_{L}-[N\alpha(u), N[v, w]_{L}]_{L}+N[\alpha(u), N[v, w]_{L}]_{L}+[\alpha(u), N[v, w]_{N}]_{L})\\
&&+(-1)^{|u||v|}([N\alpha(v), [Nw, u]_{L}]_{L}-N[\alpha(v), [Nw, u]_{L}]_{L}+[N\alpha(v), [w, Nu]_{L}]_{L}\\
&&-N[\alpha(v), [w, Nu]_{L}]_{L}-[N\alpha(v), N[w, u]_{L}]_{L}+N[\alpha(v), N[w, u]_{L}]_{L}+[\alpha(v), N[w, u]_{N}]_{L})\\
&&+(-1)^{|v||w|}([N\alpha(w), [Nu, v]_{L}]_{L}-N[\alpha(w), [Nu, v]_{L}]_{L}+[N\alpha(w), [u, Nv]_{L}]_{L}\\
&&-N[\alpha(w), [u, Nv]_{L}]_{L}-[N\alpha(w), N[u, v]_{L}]_{L}+N[\alpha(w), N[u, v]_{L}]_{L}+[\alpha(w), N[u, v]_{N}]_{L})\\
&=&(-1)^{|u||w|}[N\alpha(u), [Nv, w]_{L}]_{L}+(-1)^{|u||v|}[N\alpha(v), [w,Nu]_{L}]_{L}\\
&&+(-1)^{|v||w|}[\alpha(w), N([u, v]_{N})]_{L}+c.p.(u, v, w)\\
&&+(-1)^{|u||v|}(N[\alpha(v), N[w, u]_{L}]_{L}-N[\alpha(v),[w, u]_{L}]_{L})+c.p.(u, v, w)\\
&&-N((-1)^{|u||w|}[\alpha(u), [Nv, w]_{L}]_{L}+(-1)^{|v||w|}[\alpha(w), [u, Nv]_{L}]_{L})+c.p.(u, v, w).
\end{eqnarray*}
Since N commutes with $\alpha$,  by the hom-superJacobi identity of L,  we have
$$(-1)^{|u||w|}[N\alpha(u), [Nv, w]_{L}]_{L}+(-1)^{|u||v|}[\alpha(Nv), [w, Nu]_{L}]_{L}+(-1)^{|v||w|}[\alpha(w), [Nu, Nv]_{L}]_{L}=0.$$
Since N is a hom-Nijienhuis operator,  we have
\begin{eqnarray*}
&&(-1)^{|u||w|}[N\alpha(u), [Nv, w]_{L}]_{L}+(-1)^{|u||v|}[\alpha(Nv), [w, Nu]_{L}]_{L}+(-1)^{|v||w|}[\alpha(w), [u, v]_{N}]_{L}\\
&&+c.p.(u, v, w)=0.
\end{eqnarray*}
Furthermore,  also by the fact that $N$ is a hom-Nijienhuis
operator, we have
\begin{eqnarray*}
&&(-1)^{|u||v|}(N[\alpha(v), N[w, u]_{L}]_{L}-N[\alpha(v),[w, u]_{L}]_{L})+c.p.(u, v, w)\\
&=&(-1)^{|u||v|}(-N[N\alpha(v), [w, u]_{L}]_{L}+N^{2}[\alpha(v), [w, u]_{L}]_{L})+c.p.(u, v, w)\\
&=&-(-1)^{|u||v|}N[N\alpha(v), [w, u]_{L}]_{L}+(-1)^{|u||v|}N^{2}[\alpha(v), [w, u]_{L}]_{L})+c.p.(u, v, w).
\end{eqnarray*}
Thus by the hom-superJacobi identity of $L$,  we have
\begin{eqnarray*}
&&(-1)^{|u||v|}(N[\alpha(v), N[w, u]_{L}]_{L}-N[\alpha(v), [Nw, u]_{L}]_{L})+c.p.(u, v, w)\\
&=&-(-1)^{|u||v|}N[N\alpha(v), [w, u]_{L}]_{L}+c.p.(u, v, w).
\end{eqnarray*}
Therefore,  we have
\begin{eqnarray*}&&(-1)^{|u||w|}\psi(\alpha(u), \psi(v, w))+c.p.(u, v, w)\\
&=&-(-1)^{|u||v|}N[N\alpha(v), [w, u]_{L}]_{L}-N((-1)^{|u||w|}[\alpha(u), [Nv, w]_{L}]_{L}\\
&&+(-1)^{|v||w|}[\alpha(w), [u, Nv]_{L}]_{L})+c.p.(u, v, w)\\
&=&-N((-1)^{|u||v|}[\alpha(Nv), [w, u]_{L}]_{L}+(-1)^{|u||w|}[\alpha(u), [Nv, w]_{L}]_{L}\\
&&+(-1)^{|v||w|}[\alpha(w), [u, Nv]_{L}]_{L})+c.p.(u, v, w)\\
&=&0.
\end{eqnarray*}
Thus $\psi$ generates a deformation of the hom-Lie superalgebra $(L, [\cdot, \cdot]_{L}, \alpha)$.

Let $T_{t}=Id+tN$,  then we have
\begin{eqnarray*}
T_{t}[u, v]_{t}&=&(Id+tN)([u, v]_{L}+t\psi(u, v))\\
&=&(Id+tN)([u, v]_{L}+t[u, v]_{N})\\
&=&[u, v]_{L}+t([u, v]_{N}+N[u, v]_{L})+t^{2}N[u, v]_{N}.
\end{eqnarray*}
On the other hand,  we have
\begin{eqnarray*}
[T_{t}(u), T_{t}(v)]_{L}&=&[u+tNu, v+tNv]_{L}\\
&=&[u, v]_{L}+t([Nu, v]_{N}+[u, Nv]_{L})+t^{2}[Nu, Nv]_{L}.
\end{eqnarray*}
By (9), (10),  we have
$$T_{t}[u, v]_{t}=[T_{t}(u), T_{t}(v)]_{L}, $$
which implies that the deformation is trivial.
\end{proof}

\section{$T$*-extension of hom-Lie superalgebras}
\begin{defn}
Let $(L,[\cdot,\cdot]_L,\alpha)$ be a hom-Lie superalgebras. A bilinear form  $f$ on $L$ is said to be nondegenerate if
$$L^\perp=\{x\in L|f(x,y)=0, \forall y\in L\}=0,$$
invariant if
$$f([x,y],z)=f(x,[y,z]), \forall x,y,z\in L,$$
supersymmetric if
$$f(x,y)=-(-1)^{|x||y|}f(y,x),$$
A subspace $I$ of $L$ is called isotropic if $I\subseteq I^\bot$.
\end{defn}
\begin{defn}
A bilinear form $f$ on a hom-Lie superalgebras $(L,[\cdot,\cdot]_L,\alpha)$ is said to be superconsistent if $f$ satisfies
$$f(x,y)=0, \forall x\in L_{|x|}, y\in L_{|y|}, |x|+|y|\neq0.$$
Throughout this paper, we only consider superconsistent bilinear forms.
\end{defn}
\begin{defn}
Let $(L,[\cdot,\cdot]_L,\alpha)$ be a hom-Lie superalgebras over a field $\K$. If $L$ admits a nondegenerate invariant supersymmetric bilinear form $f$, then we call $(L,f,\alpha)$ a quadratic hom-Lie superalgebras. In particular, a quadratic vector space $V$ is a $Z_{2}$-graded vector space admitting a nondegenerate supersymmetric bilinear form.

 If $(L^{'},[\cdot,\cdot]_L',\beta)$ be another hom-Lie superalgebra, Two quadratic quadratic hom-Lie superalgebras $(L,f,\alpha)$ and
  $(L^{'},f',\beta)$ are said to be isometric if there exists a hom-Lie superalgebras isomorphism $\phi: L\rightarrow L^{'}$ such that
   $f(x, y)=f'(\phi(x), \phi(y)), \forall x, y\in L$.
\end{defn}
\begin{defn}
Let $(L,[\cdot,\cdot]_L,\alpha)$ be a hom-Lie superalgebras over a
field $\K$. We say a $Z_{2}$-graded vector space
$V=V_{\bar{0}}\oplus V_{\bar{1}}$ is a graded $L$-module if
$$L_g\cdot V_h\subseteq V_{g+h},$$
for every $g,h\in Z_{2}$ and
$$[x,y]\cdot v=x\cdot(y\cdot v)-(-1)^{|x||y|}y\cdot(x\cdot v),$$
for every $v\in V, x, y\in L$.
\end{defn}
Now we introduce the cohomology theory of hom-Lie superalgebras, which can be find in \cite{Faouzi&Abdenacer}.

Let $V=V_{\bar{0}}\oplus V_{\bar{1}}$ be a graded $L$-module. Denote
by $C^n(L;V)(n\geq0,C^0(L;V)=V)$ the $Z_{2}$-graded vector space
spanned by all $n$-linear homogenous mappings $f$ of
$L\times\cdots\times L$ into $V$ satisfying
$$f(x_1,\cdots,x_i,x_{i+1},\cdots,x_n)=-(-1)^{|x_i||x_{i+1}|}f(x_1,\cdots,x_{i+1},x_i,\cdots,x_n),$$
where $C^n(L;V)_{\theta}=\{f\in C^n(L;V)| |f(x_1,\cdots,x_n)|=|x_1|+\cdots+|x_n|+\theta\}$, $\theta\in Z_{2}.$

Then $C^n(L;V)$ is a graded $L$-module: If $x\in L$ and $f\in C^n(L;V)$, the action of $L$ on $C^n(L;V)$ is defined by
$$(x\cdot f)(x_1,\cdots,x_n)=x\cdot f(x_1,\cdots,x_n)-\sum_{i=1}^n(-1)^{|x|(|f|+|x_1|+\cdots+|x_{i-1}|)}f(x_1,\cdots,[x,x_i],\cdots,x_n).$$
For a given $r$, we define a map $\delta^{n}_{r}: C^n(L;V)\rightarrow C^{n+1}(L;V)$ by
\begin{eqnarray*}
(\delta^{n}_{r}f)(x_0,\cdots,x_n)&=&\sum_{i=0}^n(-1)^i(-1)^{(|f|+|x_0|+\cdots+|x_{i-1}|)|x_{i}|}[\alpha^{n+r-1}(x_{i}),f(x_0,\cdots,\hat{x_i},\cdots,x_n)]_{V}\\
&+&\sum_{i<j}(-1)^{j+|x_{j}|(|x_{i+1}|+\cdots+|x_{j-1}|)}\\
&&\cdot f(\alpha(x_0),\cdots,\alpha(x_{i-1}),[x_i,x_j],\alpha(x_{i+1}),\cdots,\hat{\alpha(x_j)},\cdots,\alpha(x_n)),
\end{eqnarray*}
Then $\delta^2f=0, \forall f\in C^n(L,V)$. The mapping $f\in C^n(L,V)$ is called an $n$-cocycle if $\delta f=0$. We denote by $Z^{n}(L,V)$ the
subspace spanned by $n$-cocycles. Since $\delta^{2}f=0$ for any $f \in C^{n}(L,V)$, $\delta C^{n-1}(L,V)$ is a subspace of $Z^{n}(L,V)$. Therefore we can define
a cohomology space $H^{n}(L,V)$ of $L$ as the factor space $Z^{n}(L,V)/\delta C^{n-1}(L,V)\,\, (n\geq 0).$

Consider the dual space $L^*$ of $L$, then $L^*$ is a $Z_{2}$-graded
space, where $L_{\theta}^*:=\{\beta\in L^*|\beta(x)=0, \forall
\theta\neq|x|\}$. Moreover, $L^*$ is a $L$-module:
$x\cdot\beta=-(-1)^{|b||\beta|}\beta\ad x$ [10].

The base field $\K$ itself can be considered as $Z_{2}$-graded, if one sets $\K_0=\K$, $\K_{\bar{1}}=\{0\}$. Then as a trivial graded $L$-module,
 $C^n(L,\K)(n\geq0,C^0(L,\K)=\K)$ is the $Z_{2}$-graded vector space spanned by all $n$-linear homogenous mappings $f$ of $L\times\cdots\times L$ into $\K$ satisfying
$$f(x_1,\cdots,x_i,x_{i+1},\cdots,x_n)=-(-1)^{|x_i||x_{i+}|}f(x_1,\cdots,x_{i+1},x_i,\cdots,x_n),$$
where $C^n(L,\K)_{\theta}=\{f\in C^n(L,\K)| f(x_1,\cdots,x_n)=0, \text{if}~ |x_1|+\cdots+|x_n|+\theta\neq0\}$.

Let $(L,[\cdot,\cdot]_L,\alpha)$ be a hom-Lie superalgebras over a field $\K$, $L^{*}=L^{*}_{\bar{0}}\oplus L^{*}_{\bar{1}}$ be its dual space.
Since $L=L_{\bar{0}}\oplus L_{\bar{1}}$ and $L^{*}=L^{*}_{\bar{0}}\oplus L^{*}_{\bar{1}}$ are $Z_{2}$-graded spaces, the direct sum
 $L\oplus L^*=(L_{\bar{0}}\oplus L_{\bar{1}})\oplus(L^{*}_{\bar{0}}\oplus L^{*}_{\bar{1}})=(L_{\bar{0}}\oplus L^{*}_{\bar{0}})\oplus(L_{\bar{1}}\oplus L^{*}_{\bar{1}})$ is $Z_{2}$-graded.

In the sequel, $x+f\in L\oplus L^*$ implies that $x\in L$, $f\in L^*$ and $x+f$ is homogeneous satisfying $|x+f|=|x|=|f|$, then $(-1)^{|x+f||y+g|}=(-1)^{|x||y|}$.
\begin{lem}\cite{Faouzi&Abdenacer}
Let $\ad$ be the adjoint representation of a hom-Lie superalgebra $(L,[\cdot,\cdot]_{L},\alpha)$, and let us consider the even linear map $\pi:L\rightarrow pl(L^{*})$ defined by, $\pi(x)(f)(y)=-(-1)^{|x||f|}f\circ \ad(x)(y),\forall x,y\in L$. Then $\pi$ is a representation of $L$ on $(L^{*},\tilde{\alpha})$ if and only if
\begin{eqnarray}
\ad(x)\circ \ad(\alpha(y))-(-1)^{|x||y|}\ad(y)\circ \ad(\alpha(x))=\alpha\circ \ad([x,y]_{L}).
\end{eqnarray}
We call the representation $\pi$ the coadjoint representation of $L$
\end{lem}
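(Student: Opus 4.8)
The plan is to unwind the definition of a representation of a hom-Lie superalgebra directly on the dual space and to reduce everything, through the duality pairing $\langle f,x\rangle:=f(x)$, to an identity living on $L$ itself. For $\pi$ to be a representation of $(L,[\cdot,\cdot]_L,\alpha)$ on $(L^*,\tilde\alpha)$ it must satisfy the two identities of the same shape as those appearing in the $\alpha^{s}$-adjoint representation lemma, namely the equivariance relation $\pi(\alpha(x))\circ\tilde\alpha=\tilde\alpha\circ\pi(x)$ and the hom-Leibniz relation
\[
\pi([x,y]_L)\circ\tilde\alpha=\pi(\alpha(x))\circ\pi(y)-(-1)^{|x||y|}\pi(\alpha(y))\circ\pi(x).
\]
Since $\pi(x)$ is, up to the Koszul sign $-(-1)^{|x||f|}$, the transpose of $\ad(x)$, both relations are statements about operators on $L^{*}$, and I would test each one against an arbitrary $f\in L^{*}$ evaluated at an arbitrary $z\in L$.

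First I would dispose of the equivariance relation. Writing out $\pi(\alpha(x))(\tilde\alpha f)(z)$ and $\tilde\alpha(\pi(x)f)(z)$, pushing $\tilde\alpha$ through the pairing, and invoking multiplicativity $\alpha([u,v]_L)=[\alpha(u),\alpha(v)]_L$, I expect this relation to hold by the very construction of $\tilde\alpha$ from $\alpha$, so that it imposes no extra condition. This is the routine, structural half of the statement and contributes nothing beyond a consistency check.

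The heart of the proof is the hom-Leibniz relation. I would evaluate $(\pi([x,y]_L)\circ\tilde\alpha)(f)$ at $z$ to obtain $-(-1)^{(|x|+|y|)|f|}f\big(\alpha([[x,y]_L,z]_L)\big)$. On the other side I would expand $\pi(\alpha(x))\pi(y)$ and $\pi(\alpha(y))\pi(x)$ by applying the definition of $\pi$ twice; each application introduces a Koszul sign, and after collecting them the right-hand side becomes
\[
(-1)^{(|x|+|y|)|f|}\Big((-1)^{|x||y|}f\big([y,[\alpha(x),z]_L]_L\big)-f\big([x,[\alpha(y),z]_L]_L\big)\Big).
\]
Cancelling the common factor $(-1)^{(|x|+|y|)|f|}$ and using that $f\in L^{*}$ is arbitrary (so it may be stripped off the pairing), the relation collapses to
\[
[x,[\alpha(y),z]_L]_L-(-1)^{|x||y|}[y,[\alpha(x),z]_L]_L=\alpha([[x,y]_L,z]_L),
\]
which is exactly identity $(12)$ once each triple bracket is rewritten through $\ad$. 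Reading the chain of equivalences backwards yields the converse, so $\pi$ is a representation precisely when $(12)$ holds.

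The main obstacle will be the Koszul-sign bookkeeping: one must track the parities $|x|,|y|,|f|$ through the two nested applications of $\pi$, verify that the signs on the two surviving terms assemble into the super-skew-symmetric combination on the left of $(12)$, and, most delicately, confirm that the twist $\tilde\alpha$ is placed so that the outer twisting appears as $\alpha\circ\ad([x,y]_L)$ and not $\alpha^{-1}\circ\ad([x,y]_L)$. Once the signs and the role of $\tilde\alpha$ are pinned down, the reduction to $(12)$ is forced and the equivalence follows in both directions.
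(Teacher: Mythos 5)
The paper itself offers no proof of this lemma --- it is quoted verbatim from \cite{Faouzi&Abdenacer} --- so your proposal can only be measured against the standard computation, and its core \emph{is} that computation, carried out correctly. Testing the hom-Leibniz axiom $\pi([x,y]_{L})\circ\tilde\alpha=\pi(\alpha(x))\circ\pi(y)-(-1)^{|x||y|}\pi(\alpha(y))\circ\pi(x)$ on a pair $(f,z)$, your Koszul bookkeeping is right: the two nested applications of $\pi$ give $\pi(\alpha(x))(\pi(y)f)(z)=(-1)^{|x||y|}(-1)^{(|x|+|y|)|f|}f([y,[\alpha(x),z]_{L}]_{L})$ and its $x\leftrightarrow y$ partner, the common factor $(-1)^{(|x|+|y|)|f|}$ cancels, and stripping off the arbitrary $f$ yields exactly $\ad(x)\circ\ad(\alpha(y))-(-1)^{|x||y|}\ad(y)\circ\ad(\alpha(x))=\alpha\circ\ad([x,y]_{L})$, with the equivalence running in both directions as you say.

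The genuine gap is your first paragraph, where you claim the equivariance axiom $\pi(\alpha(x))\circ\tilde\alpha=\tilde\alpha\circ\pi(x)$ ``holds by the very construction of $\tilde\alpha$'' and ``imposes no extra condition.'' Note that your own Leibniz computation forces $\tilde\alpha(f)=f\circ\alpha$: only this twist produces $f(\alpha([[x,y]_{L},z]_{L}))$ on the left-hand side, hence $\alpha\circ\ad([x,y]_{L})$ rather than $\alpha^{-1}\circ\ad([x,y]_{L})$ in the final identity (this also matches the paper's later map $\alpha'(x+f)=\alpha(x)+f\circ\alpha$). With that $\tilde\alpha$, unwinding the equivariance axiom against $(f,z)$ gives $f(\alpha([\alpha(x),z]_{L}))=f([x,\alpha(z)]_{L})$ for all $f,z$, i.e. $\alpha\circ\ad(\alpha(x))=\ad(x)\circ\alpha$; invoking multiplicativity, as you propose, turns the left side into $[\alpha^{2}(x),\alpha(z)]_{L}$, so the axiom reads $[x-\alpha^{2}(x),\alpha(z)]_{L}=0$ --- a nontrivial constraint (equivalently $\ad(\alpha^{2}(x))=\ad(x)$ when $\alpha$ is invertible), not an identity; by Lemma 3.2 it implies the displayed condition but is not implied by it. So the step you planned to ``dispose of'' routinely would fail if actually written out. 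A complete proof must either observe that the notion of representation being used imposes only the hom-Leibniz axiom (in which case your main computation already suffices and reproduces the cited argument), or else address the equivariance identity head-on as a separate condition rather than a structural freebie.
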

\begin{lem}
Under the above notations, let $(L,[\cdot,\cdot]_L,\alpha)$ be a hom-Lie superalgebra, and $\omega:L\times L\rightarrow L^{*}$ be an even bilinear mapping. Assume that the coadjoint representation exists. The $Z_{2}$-graded spaces $L\oplus L^*$, provided with the following bracket and linear map defined respectively by
\begin{equation}
[x+f,y+g]_{L\oplus L^{\ast}}=[x,y]_{L}+\omega(x,y)+\pi(x)g-(-1)^{|x||y|}\pi(y)f,
\end{equation}
\begin{equation}
\alpha^{'}(x+f)=\alpha(x)+f\circ\alpha.
\end{equation}
then  $(L\oplus L^{*},[\cdot,\cdot]_{L\oplus L^{\ast}},\alpha^{'})$ is a hom-Lie superalgebra if and only if $\omega$ is a 2-cocycle: $L\times L\rightarrow L^{*}$, i.e. $\omega\in Z^2(L, L^*)_{\bar{0}}$.
\end{lem}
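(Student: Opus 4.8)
The plan is to verify directly the two defining properties of a hom-Lie superalgebra in Definition~2.1(1)---super-skew-symmetry and the hom-super-Jacobi identity---for the given bracket and the map $\alpha'$, reading off at each step exactly which conditions on $\omega$ are forced. Since $\alpha$ is not assumed multiplicative here, multiplicativity of $\alpha'$ is not among the things that must be checked.

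I would first dispose of the structural and skew-symmetry points. The map $\alpha'(x+f)=\alpha(x)+f\circ\alpha$ is manifestly even and linear. For the bracket to respect the $Z_2$-grading on $L\oplus L^*$, the dual part of $[x+f,y+g]_{L\oplus L^{\ast}}$ must have degree $|x|+|y|$; as $\pi$ is even, this forces $|\omega(x,y)|=|x|+|y|$, i.e. $\omega\in C^2(L,L^*)_{\bar 0}$. Comparing $[x+f,y+g]_{L\oplus L^{\ast}}$ with $-(-1)^{|x||y|}[y+g,x+f]_{L\oplus L^{\ast}}$, the $L$-component matches by skew-symmetry in $L$ and the two $\pi$-terms match after relabeling, leaving the single requirement $\omega(x,y)=-(-1)^{|x||y|}\omega(y,x)$. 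Thus super-skew-symmetry of the new bracket is equivalent to $\omega$ being a $2$-cochain.

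The core is the hom-super-Jacobi identity. Writing $X=x+f$, $Y=y+g$, $Z=z+h$, I would expand $(-1)^{|x||z|}[\alpha'(X),[Y,Z]_{L\oplus L^{\ast}}]_{L\oplus L^{\ast}}+\text{c.p.}(X,Y,Z)$ and project onto $L$ and onto $L^*$. The $L$-component is exactly $(-1)^{|x||z|}[\alpha(x),[y,z]_L]_L+\text{c.p.}$, which vanishes by the hom-super-Jacobi identity in $L$ and imposes nothing. In the $L^*$-component I would sort the terms into two groups: those in which $\pi$ acts twice on the dual slots, namely $\pi(\alpha(x))\pi(y)h$, $\pi(\alpha(x))\pi(z)g$ and $\pi([y,z]_L)(f\circ\alpha)$ with their cyclic images, and those carrying $\omega$, namely $\omega(\alpha(x),[y,z]_L)$ and $\pi(\alpha(x))\omega(y,z)$ with cyclic images. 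The first group collapses once summed cyclically: grouping the terms acting on a fixed dual argument (say $h$) and factoring the common sign reproduces $\pi(\alpha(x))\pi(y)-(-1)^{|x||y|}\pi(\alpha(y))\pi(x)-\pi([x,y]_L)\circ\tilde\alpha$ applied to $h$, which is zero precisely by the representation axiom that, by hypothesis, holds because the coadjoint representation $\pi$ exists.

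It then remains to identify the surviving $\omega$-group with a coboundary. After Koszul-sign bookkeeping, $(-1)^{|x||z|}\bigl(\omega(\alpha(x),[y,z]_L)+\pi(\alpha(x))\omega(y,z)\bigr)+\text{c.p.}$ reorganizes into $(\delta\omega)(x,y,z)$: the terms $\pi(\alpha(x))\omega(y,z)$ match the first coboundary sum $[\alpha^{\bullet}(x_i),\omega(\widehat{x_i})]_{L^*}$ evaluated through the module action $\pi$, while the terms $\omega(\alpha(x),[y,z]_L)$ match the second sum $\omega(\ldots,[x_i,x_j],\ldots)$ after using skew-symmetry of $\omega$ to reorder arguments. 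Hence the $L^*$-component of the hom-super-Jacobi expression equals $(\delta\omega)(x,y,z)$, which vanishes for all homogeneous $x,y,z$ exactly when $\delta\omega=0$. Combining the three steps yields both implications at once: the triple is a hom-Lie superalgebra iff $\omega$ is even, skew-symmetric and $\delta$-closed, i.e. $\omega\in Z^2(L,L^*)_{\bar 0}$. I expect the main obstacle to be this last identification---keeping every Koszul sign and every $\alpha$-twist exactly right so that the expanded $\pi$-and-$\omega$ terms line up with the precise coboundary formula, and confirming that the double-$\pi$ terms cancel through the representation axiom rather than leaving spurious $\omega$-independent obstructions.
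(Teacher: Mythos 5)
Your proposal is correct and follows essentially the same route as the paper's proof: verify super-skew-symmetry to force $\omega\in C^2(L,L^*)_{\bar 0}$, split the hom-super-Jacobi expression into the $L$-component (vanishing by the Jacobi identity of $L$), the double-$\pi$ terms (cancelled, grouped by fixed dual argument, via the coadjoint representation axiom), and the surviving $\omega$-terms, which are identified with the cocycle condition $\delta\omega=0$. The only difference is presentational---the paper writes out the sign bookkeeping and the final cocycle identity explicitly where you defer it to "Koszul-sign bookkeeping"---but the decomposition and the use of the representation hypothesis are identical.
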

\begin{proof}
For any homogeneous elements $x+f, y+g, z+h,\in L\oplus L^{*}$, we obtain
\begin{eqnarray*}
&&-(-1)^{|x||y|}[y+g,x+f]_{L\oplus L^{\ast}}\\
&=&-(-1)^{|x||y|}([y,x]_{L}+\omega(y,x)+\pi(y)f-(-1)^{|x||y|}\pi(x)g)\\
&=&[x,y]_{L}-(-1)^{|x||y|}\omega(y,x)+\pi(x)g-(-1)^{|x||y|}\pi(x)f.
\end{eqnarray*}
Then (1) holds if and only if $\omega\in C^2(L, L^*)_{\bar{0}}$. Note that
\begin{eqnarray*}
&&(-1)^{|x||z|}[\alpha^{'}(x+f),[y+g,z+h]_{L\oplus L^{\ast}}]_{L\oplus L^{\ast}}+c.p.(x+f,y+g,z+h)\\
&=&(-1)^{|x||z|}[\alpha(x),[y,z]_{L}]_{L}+c.p.(x,y,z)\\
&&+(-1)^{|x||z|}\omega(\alpha(x),[y,z]_{L})+(-1)^{|x||z|}\pi(\alpha(x))\omega(y,z)+c.p(x,y,z)\\
&&+(-1)^{|x||z|}\pi(\alpha(x))(\pi(y)h)-(-1)^{|x||z|+|y||z|}\pi(\alpha(x))(\pi(z)g)\\
&&-(-1)^{|x||y|}\pi([y,z]_{L})f\circ\alpha+c.p.(x+f,y+g,z+h).
\end{eqnarray*}
By the hom superJacobi identity , $$(-1)^{|x||z|}[\alpha(x),[y,z]_{L}]_{L}+c.p.(x,y,z)=0.$$
on the other hand
\begin{eqnarray*}
&&(-1)^{|x||z|}\pi(\alpha(x))(\pi(y)h)-(-1)^{|x||z|+|y||z|}\pi(\alpha(x))(\pi(z)g)\\&&-(-1)^{|x||y|}\pi([y,z]_{L})f\circ\alpha+c.p.\\
&=&(-1)^{|y||x|}\pi(\alpha(y))(\pi(z)f)-(-1)^{|z||y|+|x||y|}\pi(\alpha(z))(\pi(y)f)-(-1)^{|x||y|}\pi([y,z]_{L})f\circ\alpha\\
&&+(-1)^{|z||y|}\pi(\alpha(z))(\pi(x)g)-(-1)^{|x||z|+|y||z|}\pi(\alpha(x))(\pi(z)g)-(-1)^{|y||z|}\pi([z,x]_{L})g\circ\alpha\\
&&+(-1)^{|x||z|}\pi(\alpha(x))(\pi(y)h)-(-1)^{|y||x|+|z||x|}\pi(\alpha(y))(\pi(x)h)-(-1)^{|z||x|}\pi([x,y]_{L})h\circ\alpha.
\end{eqnarray*}
Since $\pi$ is the coadjoint representation of $L$, we have
\begin{eqnarray*}
&&(-1)^{|z||x|}\pi([x,y]_{L})h\circ\alpha\\
&=&-(-1)^{|z||y|}h(\alpha\circ\ad([x,y]_{L}))\\
&=&-(-1)^{|z||y|}h\ad(x)\ad(\alpha(y))+(-1)^{|x||y|+|z||y|}h\ad(y) \ad(\alpha(x))\\
&=&(-1)^{|z||y|+|z||x|}(\pi(x)h)(\ad(\alpha(y)))-(-1)^{|x||y|}(\pi(y)h)(\ad(\alpha(x)))\\
&=&-(-1)^{|x||y|+|z||x|}\pi(\alpha(y))(\pi(x)h)+(-1)^{|x||z|}\pi(\alpha(x))(\pi(y)h).
\end{eqnarray*}
Therefore, we have
\begin{eqnarray*}
(-1)^{|x||z|}\pi(\alpha(x))\pi(y)h-(-1)^{|y||x|+|z||x|}\pi(\alpha(y))\pi(x)h-(-1)^{|z||x|}\pi([x,y]_{L})h\circ\alpha=0.
\end{eqnarray*}
Obviously,
$$(-1)^{|x||z|}\pi(\alpha(x))(\pi(y)h)-(-1)^{|x||z|+|y||z|}\pi(\alpha(x))(\pi(z)g)-(-1)^{|x||y|}\pi([y,z]_{L})f\circ\alpha+c.p.=0.$$
Consequently,
$$(-1)^{|x||z|}[\alpha^{'}(x+f),[y+g,z+h]_{\Omega}]_{\Omega}+c.p.(x+f,y+g,z+h)=0.$$
if and only if
\begin{eqnarray*}
0&=&\pi(\alpha(x))(\omega(y,z))-(-1)^{|x||y|}\pi(\alpha(y))(\omega(x,z))+(-1)^{(|x|+|y|)|z|}\pi(\alpha(z))(\omega(x,y))\\
&&+\omega(\alpha(x),[y,z]_{L})+(-1)^{|y||z|}\omega([x,z]_{L},\alpha(y))-\omega([x,y]_{L},\alpha(z)).
\end{eqnarray*}
That is $\omega\in Z^{2}(L,L^{*})$.\\
Then confirmation holds if and only if $\omega\in Z^2(L, L^*)_{\bar{0}}$. Consequently, we prove the lemma.
\end{proof}

Clearly, $L^*$ is an abelian hom-Lie superalgebra ideal of $(L\oplus L^{*},[\cdot,\cdot]_{\alpha^{'}},\alpha^{'})$ and $L$ is isomorphic to the factor hom-Lie superalgebra $(L\oplus L^{*})/L^*$. Moreover, consider the following supersymmetric bilinear form $q_{L}$ on $L\oplus L^{*}$ for all $x+f, y+g\in L\oplus L^*$:
$$q_{L}(x+f,y+g)=f(y)+(-1)^{|x||y|}g(x).$$
Then we have the following lemma:

\begin{lem}\label{lemma3.2}
Let $L$, $L^*$, $\omega$ and $q_L$ be as above. Then the triple $(L\oplus L^{*},q_L,\alpha^{'})$ is a quadratic hom-Lie superalgebra if and only if $\omega$ is supercyclic in the following sense:
$$w(x,y)(z)=(-1)^{|x|(|y|+|z|)}w(y,z)(x) ~~\text{for all}~ x,y,z\in L.$$
\end{lem}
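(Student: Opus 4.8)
The plan is to check the three defining conditions of a quadratic hom-Lie superalgebra for the triple $(L\oplus L^*, q_L, \alpha')$: supersymmetry, nondegeneracy, and invariance of $q_L$. Since $\omega$ is, by the standing hypotheses, a $2$-cocycle, the preceding lemma already provides the hom-Lie superalgebra structure on $L\oplus L^*$, so only the properties of $q_L$ remain to be analyzed. I expect supersymmetry and nondegeneracy to hold unconditionally (independently of $\omega$), and the invariance of $q_L$ to be exactly equivalent to the supercyclicity of $\omega$.

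First I would settle the two easy conditions. Supersymmetry of $q_L(x+f,y+g)=f(y)+(-1)^{|x||y|}g(x)$ is a direct sign check using $|x+f|=|x|=|f|$ together with superconsistency, so that the only nonvanishing contributions occur when $|x|=|y|$. For nondegeneracy, suppose $x+f\in(L\oplus L^*)^{\perp}$: pairing against covectors of the form $0+h$ gives $(-1)^{|x||h|}h(x)=0$ for all $h\in L^*$, forcing $x=0$ because $L^*$ separates the points of $L$; pairing against vectors $y+0$ gives $f(y)=0$ for all $y\in L$, forcing $f=0$. Neither argument involves $\omega$.

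The heart of the proof is invariance, i.e. $q_L([x+f,y+g],z+h)=q_L(x+f,[y+g,z+h])$ for all homogeneous arguments. I would expand both sides by substituting the bracket of $L\oplus L^*$ together with the identity $\pi(x)(g)(z)=-(-1)^{|x||g|}g([x,z]_L)$, recording throughout that $|f|=|x|$, $|g|=|y|$, $|h|=|z|$. The contributions $f([y,z]_L)$ occur identically on the two sides and cancel. The remaining terms split into three families, according to whether they carry a factor of $\omega$, of $h$ evaluated on a bracket in $L$, or of $g$ evaluated on such a bracket. Using the super-antisymmetry $[y,x]_L=-(-1)^{|x||y|}[x,y]_L$ and $[z,x]_L=-(-1)^{|x||z|}[x,z]_L$, I would reduce the two exponents attached to the $h$-family to the common value $|x||z|+|y||z|$ and the two exponents attached to the $g$-family to the common value $|x||y|$; within each family the two terms then appear with opposite overall signs and cancel. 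What survives is precisely $\omega(x,y)(z)-(-1)^{|x|(|y|+|z|)}\omega(y,z)(x)$, so invariance holds for all $x,y,z$ exactly when $\omega$ is supercyclic.

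The main obstacle is the sign bookkeeping in this last step: one must keep the gradings of the covector slots straight and confirm that, after applying super-antisymmetry of the bracket, the exponents on the paired $g$-terms (and likewise the paired $h$-terms) agree modulo $2$, since it is precisely these coincidences that annihilate everything except the residual $\omega$-difference. Granting that cancellation, the stated equivalence with supercyclicity of $\omega$ follows at once, and together with the unconditional supersymmetry and nondegeneracy of $q_L$ this yields the lemma.
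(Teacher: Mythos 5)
Your proposal is correct and follows essentially the same route as the paper: verify nondegeneracy directly, then expand $q_L([x+f,y+g]_{L\oplus L^{*}},z+h)$ and $q_L(x+f,[y+g,z+h]_{L\oplus L^{*}})$ using $\pi(x)(g)(z)=-(-1)^{|x||g|}g([x,z]_L)$, observe that the $f$-, $g$- and $h$-terms coincide on both sides, and conclude that invariance reduces exactly to $\omega(x,y)(z)=(-1)^{|x|(|y|+|z|)}\omega(y,z)(x)$. Your sign bookkeeping (the common exponents $|x||y|$ on the $g$-terms and $|z|(|x|+|y|)$ on the $h$-terms) matches the paper's computation; the only difference is that you also verify supersymmetry of $q_L$ explicitly, which the paper simply takes as evident.
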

\begin{proof}
The supersymmetric bilinear form $q_{L}$ is nondegenerate:if $x+f$ is orthogonal to  all elements of $L\oplus L^{*}$, then $f(y)=0$ and $(-1)^{|x||y|}g(x)=0$,  which implies that $x=0$ and $f=0$.

Now suppose $x+f,y+g,z+h\in L\oplus L^*$, then we have
\begin{eqnarray*}
&&q_{L}([x+f,y+g]_{L\oplus L^{\ast}}, z+h)\\
&=&q_{L}([x,y]_{L}+\omega(x,y)+\pi(x)g-(-1)^{|x||y|}\pi(y)f, z+h)\\
&=&\omega(x,y)(z)+(\pi(x)g)(z)-(-1)^{|x||y|}(\pi(y)f)(z)+(-1)^{|z|(|x|+|y|)}h([x,y]_{L})\\
&=&\omega(x,y)(z)-(-1)^{|x||y|}g([x,z]_{L})+f([y,z]_{L})+(-1)^{|z|(|x|+|y|)}h([x,y]_{L}).
\end{eqnarray*}
On the other hand,
\begin{eqnarray*}
&&q_{L}(x+f,[y+g,z+h]_{L\oplus L^{\ast}})\\
&=&q_{L}(x+f,[y,z]_{L}+\omega(y,z)+\pi(y)h-(-1)^{|y||z|}\pi(z)g)\\
&=&f([y,z]_{L})+(-1)^{|x|(|y|+|z|)}\omega(y,z)(x)+(-1)^{|x|(|y|+|z|)}(\pi(y)h)(x)\\&&-(-1)^{|x|(|y|+|z|)+|y|z|}(\pi(z)g)(x)\\
&=&f([y,z]_{L})+(-1)^{|x|(|y|+|z|)}\omega(y,z)(x)+(-1)^{|z|(|x|+|y|)}h([x,y]_{L})-(-1)^{|x||y|}g([x,z]_{L}).
\end{eqnarray*}
Hence the lemma follows.
\end{proof}

Now, for a supercyclic 2-cocycle $\omega$ we shall call the quadratic hom-Lie superalgebra $(L\oplus L^{*},q_L,\alpha^{'})$ the $T^*$-extension of $L$ (by $\omega$) and denote the hom-Lie superalgebra  $(L\oplus L^{*},[\cdot,\cdot]_{L\oplus L^{\ast}},\alpha^{'})$ by $T_\omega^*L$.

\begin{defn}
Let $L$ be a hom-Lie superalgebra over a field $\K$. We inductively define a derived series
$$(L^{(n)})_{n\geq 0}: L^{(0)}=L,\ L^{(n+1)}=[L^{(n)},L^{(n)}],$$
a central descending series
$$(L^{n})_{n\geq 0}: L^{0}=L,\ L^{n+1}=[L^{n},L],$$
and a central ascending series
$$(C_{n}(L))_{n\geq 0}: C_{0}(L)=0, C_{n+1}(L)=C(C_{n}(L)),$$
where $C(I)=\{a\in L| [a,L]\subseteq I\}$ for a subspace $I$ of $L$.

$L$ is called solvable and nilpotent(of length $k$) if and only if there is a (smallest) integer $k$ such that $L^{(k)}=0$ and $L^{k}=0$, respectively.
\end{defn}

In the following theorem we discuss some properties of a $T_\omega^*L$.

\begin{thm}
Let $(L,[\cdot,\cdot]_{L},\alpha)$ be a hom-Lie superalgebra over a field $\K$.
\begin{enumerate}[(1)]
   \item  If $L$ is solvable (nilpotent) of length $k$, then the $T^{*}$-extension
          $T^{*}_{\omega}L$ is solvable (nilpotent) of length $r$, where $k\leq r\leq k+1 (k\leq r\leq2k-1)$.
   \item  If $L$ is nilpotent of length $k$, so is the trivial $T^{*}$-extension $T^{*}_{0}L$.
   \item  If $L$ is decomposed into a direct sum of two  hom-Lie superalgebra  ideals of $L$, so is the trivial $T^{*}$-extension $T^{*}_{0}L$.
\end{enumerate}
\end{thm}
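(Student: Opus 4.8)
The plan is to exploit the two structural features of $T_\omega^*L=L\oplus L^*$ that are already available: the canonical projection $p:T_\omega^*L\to L$, $p(x+f)=x$, is a surjective morphism of hom-Lie superalgebras with kernel $L^*$, and $L^*$ is an abelian ideal; moreover $q_L$ is a nondegenerate invariant form for which $L^*$ is totally isotropic, since $q_L(f,g)=f(0)+g(0)=0$. All three items will be reduced to tracking how the derived and central descending series of $T_\omega^*L$ sit inside $L\oplus L^*$ relative to $p$ and to the coadjoint action $\pi$.

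For the solvable half of (1), since $p$ is a surjective morphism it commutes with the derived series, so $p\big((T_\omega^*L)^{(n)}\big)=L^{(n)}$ for all $n$. Surjectivity together with $L^{(k-1)}\neq0$ forces $(T_\omega^*L)^{(k-1)}\neq0$, giving $r\ge k$; and $L^{(k)}=0$ yields $(T_\omega^*L)^{(k)}\subseteq\ker p=L^*$, whence $(T_\omega^*L)^{(k+1)}=[(T_\omega^*L)^{(k)},(T_\omega^*L)^{(k)}]\subseteq[L^*,L^*]=0$, so $r\le k+1$. The nilpotent half begins identically: $p\big((T_\omega^*L)^{n}\big)=L^{n}$ gives $r\ge k$ and $(T_\omega^*L)^{k}\subseteq L^*$. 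To continue past $L^*$ I would use that for $f\in L^*$ one has $[f,y+g]=-(-1)^{|f||y|}\pi(y)f$, so each further bracket applies one coadjoint operator; writing $\mathrm{Ann}(L^{j})=\{f\in L^*\mid f(L^{j})=0\}$ and using $(\pi(y)f)(z)=\pm f([y,z])$ together with $[L,L^{j-1}]=L^{j}$, one checks $\pi(L)\,\mathrm{Ann}(L^{j})\subseteq\mathrm{Ann}(L^{j-1})$, with $\mathrm{Ann}(L^{k})=L^*$ and $\mathrm{Ann}(L^{0})=0$. The cleanest bookkeeping for the top power is the invariance identity $q_L\big((T_\omega^*L)^{n},(T_\omega^*L)^{m}\big)=q_L\big(T_\omega^*L,(T_\omega^*L)^{n+m}\big)$, which with the isotropy $q_L(L^*,L^*)=0$ and the nondegeneracy of $q_L$ pins down the largest nonvanishing power and delivers the upper bound. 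The delicate point here, and the main obstacle, is controlling the contribution of $\omega$ to the $L^*$-component of $(T_\omega^*L)^{n}$: unlike the coadjoint terms, $\omega(L^{k-1},L)$ need not lie in $\mathrm{Ann}(L^{k-1})$, and it is precisely the supercyclicity of $\omega$ and the cocycle identity of the preceding lemma that must be fed in to close the estimate.

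For (2) the extension is trivial, so the $\omega$-term disappears and $T_0^*L=L\ltimes_\pi L^*$. The improvement to length exactly $k$ comes from the fact that the coadjoint map is a representation: the relation $\pi([x,y]_L)=\pi(x)\pi(y)-(-1)^{|x||y|}\pi(y)\pi(x)$ (up to the $\alpha$-twists of the coadjoint lemma, which preserve the $\alpha$-invariant ideals $L^{n}$, as $\alpha([L^{n-1},L])\subseteq L^{n}$) gives, by induction on $a$, that $\pi(L^{a})$ lies in the span of $(a+1)$-fold coadjoint products $\pi(L)^{a+1}$. Unwinding the recursion for the $L^*$-component $V_n$ of $(T_0^*L)^{n}$, namely $V_{n+1}\subseteq\pi(L^{n})L^*+\pi(L)V_n$, and substituting this containment collapses every summand into $\pi(L)^{n}L^*$; since $\pi(L)^{k}L^*=0$ and $L^{k}=0$, I would conclude $(T_0^*L)^{k}=0$, while $p\big((T_0^*L)^{k-1}\big)=L^{k-1}\neq0$ gives length exactly $k$.

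Finally (3) is a direct verification. Writing $L=I_1\oplus I_2$ with $I_1,I_2$ ($\alpha$-invariant) ideals, I would identify $L^*=I_1^{*}\oplus I_2^{*}$, where $I_j^{*}:=\{f\in L^*\mid f(I_{3-j})=0\}$ is the annihilator of the complementary ideal, and claim $T_0^*L=(I_1\oplus I_1^{*})\oplus(I_2\oplus I_2^{*})$. Each summand is $\alpha'$-invariant because $\alpha(I_j)\subseteq I_j$ forces $(f\circ\alpha)(I_{3-j})=f(\alpha(I_{3-j}))=0$; each is an ideal because $[x,w]_L\in I_j$ and, using $[I_1,I_2]=0$ and that $I_j$ is an ideal, both coadjoint terms $\pi(x)h$ and $\pi(w)f$ again annihilate $I_{3-j}$; and the two summands bracket-commute since $[I_1,I_2]=0$ kills the $L$-term while the annihilator conditions kill both coadjoint terms. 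The intersection is $0$ since $I_1^{*}\cap I_2^{*}=\mathrm{Ann}(I_1+I_2)=\mathrm{Ann}(L)=0$, so the sum is direct, and each piece is itself $T_0^*I_j$. Triviality of the extension is essential precisely because a nonzero $\omega(I_1,I_2)$ would destroy the bracket-orthogonality of the two summands.
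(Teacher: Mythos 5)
Your solvable half of (1) and your part (3) are, modulo notation, the paper's own arguments: the paper likewise derives $(T^{*}_{\omega}L)^{(k)}\subseteq L^{*}$ from $(T^{*}_{\omega}L)^{(n)}/L^{*}\cong L^{(n)}$ and kills the next derived term by abelianness of $L^{*}$, and its part (3) uses the same identification $L^{*}\cong I^{*}\oplus J^{*}$ with the same two computations $[I^{*},L]_{L\oplus L^{*}}(J)\subseteq I^{*}([L,J]_{L})=0$ and $[I,L^{*}]_{L\oplus L^{*}}(J)\subseteq L^{*}([I,J]_{L})=0$; your explicit check of $\alpha'$-invariance of each $I_{j}\oplus I_{j}^{*}$ is a small improvement, since the paper never verifies it. In (2) you genuinely diverge: the paper expands the nested bracket term by term using $\ad[x,y]=[\ad x,\ad y]$ and kills each summand via $f_{i}(L^{k})=0$, while you run the recursion $V_{n+1}\subseteq\pi(L^{n})L^{*}+\pi(L)V_{n}$ on the $L^{*}$-components and collapse it with $\pi(L)^{k}L^{*}=0$. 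Both routes share the same hom-theoretic looseness: in a hom-Lie superalgebra $\ad[x,y]=[\ad x,\ad y]$ and the representation identity for $\pi$ hold only in $\alpha$-twisted form (Lemmas 3.2 and 4.5), which the paper uses untwisted and you defer to a parenthesis; in both cases the vanishing survives because a multiplicative $\alpha$ preserves each $L^{n}$, though neither write-up says so. (The paper's expansion in (2) also miscounts the bracket depth by one, but with $\omega=0$ the conclusion $(T^{*}_{0}L)^{k}=0$ is correct, exactly as your recursion shows.)

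The genuine gap is the nilpotent upper bound in (1), which you have located precisely but not closed. Your annihilator chain gives $(T^{*}_{\omega}L)^{k+j}\subseteq\mathrm{Ann}(L^{k-j})$ and hence only $(T^{*}_{\omega}L)^{2k}=0$, i.e.\ $r\le 2k$; to improve this to $2k-1$ one needs exactly $(T^{*}_{\omega}L)^{k}\subseteq\mathrm{Ann}(L^{k-1})$, which via your $q_{L}$-invariance identity is the statement $q_{L}\bigl((T^{*}_{\omega}L)^{k},(T^{*}_{\omega}L)^{k-1}\bigr)=0$ --- and this is the very containment the $\omega$-terms can violate, as you admit. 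The paper is no better at this step: it evaluates $[[\cdots[\beta,x_{1}+f_{1}]\cdots],x_{k-1}+f_{k-1}](b)=\beta(\ad x_{1}\cdots\ad x_{k-1}(b))$ for $\beta\in(T^{*}_{\omega}L)^{k}\subseteq L^{*}$ and declares the argument to lie in $L^{k}$; but $\ad x_{1}\cdots\ad x_{k-1}(b)$ is a $(k-1)$-fold bracket, so it lies only in $L^{k-1}$, and the paper silently assumes the same unproved $\beta(L^{k-1})=0$.

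Moreover that assumption is false in general, so your plan of feeding in supercyclicity and the cocycle identity to close the estimate cannot succeed. Take $L$ three-dimensional, purely even and abelian (so $k=1$ under the paper's convention $L^{0}=L$), $\alpha=\mathrm{id}$, and let $\omega(x,y)(z)$ be a nonzero alternating trilinear form. Then $\omega$ is skew in $(x,y)$ as the bracket requires, supercyclic, and a $2$-cocycle (every term of $\delta\omega$ contains a bracket or a coadjoint action, hence vanishes), yet $[x+f,y+g]_{L\oplus L^{*}}=\omega(x,y)$ gives $(T^{*}_{\omega}L)^{1}=\omega(L,L)\neq0$ and $(T^{*}_{\omega}L)^{2}=0$: the length is $2>2k-1=1$. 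With this indexing the bound $2k-1$ is simply unavailable (it corresponds to the shifted convention $L^{1}=L$); what your argument honestly proves is $k\le r\le 2k$, and the sharp ``length exactly $k$'' conclusion belongs to part (2), where $\omega=0$. You should either prove the $2k$ bound and flag the discrepancy with the stated theorem, or add hypotheses under which $(T^{*}_{\omega}L)^{k}\subseteq\mathrm{Ann}(L^{k-1})$ can actually be established.
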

\begin{proof}
(1) Suppose first that $L$ is solvable of length $k$. Since
$(T^{*}_{\omega}L)^{(n)}/L^{*}\cong L^{(n)}$ and $L^{(k)}=0$, we have $(T^{*}_{\omega}L)^{(k)}\subseteq L^{*}$, which implies $(T^{*}_{\omega}L)^{(k+1)}=0$ because $L^{*}$ is abelian, and it follows that $T^{*}_{\omega}L$ is solvable of length $k$ or $k+1$.

Suppose now that $L$ is nilpotent of length $k$. Since $(T^{*}_{\omega}L)^{n}/L^{*}\cong L^{n}$ and $L^{k}=0$, we have
$(T^{*}_{\omega}L)^{k}\subseteq L^{*}$. Let $\beta\in(T^{*}_{\omega}L)^{k}\subseteq L^{*}, b\in L$, $x_{1}+f_{1}, \cdots, x_{k-1}+f_{k-1}\in T^{*}_{\omega}L$, $ 1\leq i\leq k-1$, we have
\begin{eqnarray*}
&&[[\!\cdot\!\!\cdot\!\!\cdot\![\beta,x_1+f_{1}]_{L\oplus L^{\ast}},\cdot\!\!\cdot\!\!\cdot]_{L\oplus L^{\ast}},x_{k-1}+f_{k-1}]_{L\oplus L^{\ast}}(b)\\
&=&\beta\ad x_1\cdots\ad x_{k-1}(b)=\beta([x_1,[\cdot\!\!\cdot\!\!\cdot,[x_{k-1},b]_{L}\!\!\cdot\!\!\cdot\!\cdot\!]_{L}]_{L})\in\beta(L^{k})=0.
\end{eqnarray*}
This proves that $(T^{*}_{\omega}L)^{2k-1}=0$. Hence $T^{*}_{w}L$ is nilpotent of length at least $k$ and at most $2k-1$.

(2) Suppose that $L$ is nilpotent of length $k$. Adopting the notations of the proof of part (1), for $x_{k}+f_{k}\in T^{*}_{0}L$, one has
{\setlength\arraycolsep{2pt}
\begin{eqnarray*}
&&[x_1+f_1,[\cdot\!\!\cdot\!\!\cdot,[x_{k-1}+f_{k-1},x_k+f_{k}]_{L\oplus L^{\ast}}\!\!\cdot\!\!\cdot\!\cdot\!]_{L\oplus L^{\ast}}]_{L\oplus L^{\ast}}\\
&=&[x_1,[\cdot\!\!\cdot\!\!\cdot,[x_{k-1},x_k]_{L}\!\!\cdot\!\!\cdot\!\cdot\!]_{L}]_{L}+\sum_{i=1}^k[x_1,[\cdot\!\!\cdot\!\!\cdot,[x_{i-1},[f_i,[x_{i+1},[\cdot\!\!\cdot\!\!\cdot,[x_{k-1},x_k]\!\!\cdot\!\!\cdot\!\cdot\!]]]]\!\!\cdot\!\!\cdot\!\cdot\!]]\\
&=&\ad x_1\cdots\ad x_{k-1}(x_k)+f_1[\ad x_2,[\cdot\!\!\cdot\!\!\cdot,[\ad x_{k-1},\ad x_k]\!\!\cdot\!\!\cdot\!\cdot\!]]\\ &&+(-1)^{k-1}\prod_{i=1}^{k-1}(-1)^{|x_i|(|x_{i+1}|+\cdots+|x_k|)}f_k\ad x_{k-1}\cdots\ad x_1\\
&&+(-1)^{k-2}\prod_{i=1}^{k-2}(-1)^{|x_i|(|x_{i+1}|+\cdots+|x_k|)}f_{k-1}\ad x_{k}\ad x_{k-2}\cdots\ad x_1\\
&&+\sum_{i=2}^{k-2}\prod_{j=1}^{i-1}(-1)^{i-1}(-1)^{|x_j|(|x_{j+1}|+\cdots+|x_k|)}f_i[\ad x_{i+1},[\cdot\!\!\cdot\!\!\cdot,[\ad x_{k-1}, \ad x_k]\!\!\cdot\!\!\cdot\!\cdot\!]]\ad x_{i-1}\cdots\ad x_1,
\end{eqnarray*}}
where we use the fact that $\ad[x,y]=[\ad x, \ad y], \forall x,y\in L$. Note that
$$\ad x_1\cdots\ad x_{k-1}(x_k)\in L^k=0,$$
$$f_1[\ad x_2,[\cdot\!\!\cdot\!\!\cdot,[\ad x_{k-1},\ad x_k]\!\!\cdot\!\!\cdot\!\cdot\!](L)\subseteq f_1(L^k)=0,$$
$$f_k\ad x_{k-1}\cdots\ad x_1(L)\subseteq \alpha_k(L^k)=0,$$
$$f_{k-1}\ad x_{k}\ad x_{k-2}\cdots\ad x_1(L)\subseteq f_{k-1}(L^k)=0,$$
and
$$f_i[\ad x_{i+1},[\cdot\!\!\cdot\!\!\cdot,[\ad x_{k-1}, \ad x_k]\!\!\cdot\!\!\cdot\!\cdot\!]]\ad x_{i-1}\cdots\ad x_1(L)\subseteq f_i(f^k)=0,$$
then the right hand side of the equation vanishes and hence $(T^{*}_{0}L)^k=0$.

(3) Suppose that $0\neq L=I\oplus J$,  where $I$ and $J$ are two nonzero hom-Lie superalgebra ideals of $(L[\cdot,\cdot]_{L},\alpha)$. Let $I^{*}$(resp. $J^{*}$) denote the subspace of all
linear forms in $L^{*}$  vanishing on $J$(resp. $I$). Clearly, $I^{*}$(resp. $J^{*}$) can canonically be identified with the dual space of $I$(resp. $J$) and $L^*\cong I^*\oplus J^*$.

Since $[I^*,L]_{L\oplus L^{\ast}}(J)=I^*([L,J]_{L})\subseteq I^*(J)=0$ and $[I,L^*]_{L\oplus L^{\ast}}(J)=L^*([I,J]_{L})\subseteq L^*(I\cap J)=0$,  we have $[I^*,L]_{L\oplus L^{\ast}}\subseteq I^*$ and $[I,L^*]_{L\oplus L^{\ast}}\subseteq I^*$. Then
\begin{eqnarray*}[T^{*}_{0}I,T^{*}_{0}L]_{L\oplus L^{\ast}}&=&[I\oplus I^*,L\oplus L^*]_{L\oplus L^{\ast}}\\
&=&[I,L]_{L}+[I,L^*]_{L\oplus L^{\ast}}+[I^*,L]_{L\oplus L^{\ast}}+[I^*,L^*]_{L\oplus L^{\ast}}\subseteq I\oplus I^*=T^{*}_{0}I.
\end{eqnarray*}
It is clear that $T^{*}_{0}I$ is a $Z_{2}$-graded space, then $T^{*}_{0}I$ is a hom-Lie superalgebra ideal of $L$ and so is $T^{*}_{0}J$ in the same way. Hence $T^{*}_{0}L$ can be decomposed into the direct sum $T^{*}_{0}I\oplus T^{*}_{0}J$ of two nonzero hom-Lie superalgebra ideals of $T^{*}_{0}L$.
\end{proof}

In the proof of a criterion for recognizing $T^*$-extensions of a hom-Lie superalgebra, we will need the following result.

\begin{lem}\label{lemma3.1}
Let $(L,q_{L},\alpha)$ be a quadratic hom-Lie superalgebra of even dimension $n$ over a field $\K$ and $I$ be an isotropic $n/2$-dimensional subspace of $L$. Then $I$ is a hom-Lie superalgebra of $(L,[\cdot,\cdot]_{L},\alpha)$ if and only if $I$ is abelian.
\end{lem}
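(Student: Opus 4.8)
The plan is to first observe that the hypotheses force $I$ to be a Lagrangian (maximal isotropic) subspace. Since $q_L$ is nondegenerate, $\dim I^\perp = n-\dim I = n/2 = \dim I$, and the isotropy $I\subseteq I^\perp$ then upgrades to the equality $I=I^\perp$. Two consequences of this will drive everything. First, for homogeneous $u,v\in I$ we have $q_L(u,v)=0$, and more usefully an element $w\in L$ lies in $I$ if and only if $q_L(w,u)=0$ for all $u\in I$. Second, by nondegeneracy a bracket detects its own vanishing: $[x,y]_L=0$ precisely when $q_L([x,y]_L,z)=0$ for every $z\in L$. I will also use the super-cyclicity of the invariant form, namely the invariance identity $q_L([x,y]_L,z)=q_L(x,[y,z]_L)$ together with its supersymmetric rotation $q_L([x,y]_L,z)=-(-1)^{(|x|+|y|)|z|}q_L([z,x]_L,y)$, both of which follow at once from the invariance and supersymmetry in Definition~4.1.

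For the backward direction, suppose $I$ is abelian, i.e. $[I,I]_L=0$. Then $[I,I]_L=0\subseteq I$, so $I$ is closed under the bracket. The remaining defining requirement of a hom-Lie subsuperalgebra is $\alpha(I)\subseteq I$; this is the ``hom'' condition and is independent of (ab)elianness, so I will take it from the standing data (the isotropic subspace is chosen $\alpha$-stable, consistent with $\alpha$ being an isometry of $q_L$, under which $\alpha$ preserves the orthogonal structure). With both conditions in hand $I$ is a hom-Lie subsuperalgebra, and this direction is routine.

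The forward direction is the crux. Suppose $I$ is a hom-Lie subsuperalgebra, so in particular $[I,I]_L\subseteq I$; I must prove $[I,I]_L=0$. Fix homogeneous $x,y\in I$. By closure $[x,y]_L\in I=I^\perp$, so $q_L([x,y]_L,z)=0$ already holds for every $z\in I$; by nondegeneracy it therefore remains to show $q_L([x,y]_L,z)=0$ for $z$ ranging over a complement $W$ in $L=I\oplus W$. Applying invariance I rewrite $q_L([x,y]_L,z)=q_L(x,[y,z]_L)$, and since $x\in I=I^\perp$ this vanishes as soon as $[y,z]_L\in I$. Thus the whole forward direction collapses to a single key point: that the Lagrangian subalgebra $I$ is stable under bracketing with arbitrary $z\in L$, i.e. $[y,z]_L\in I$ for all $y\in I$.

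This final point is where I expect the main obstacle. The natural route is to test $[y,z]_L\in I=I^\perp$ by checking $q_L([y,z]_L,w)=0$ for all $w\in I$, and to rotate via super-cyclicity to $q_L([y,z]_L,w)=-(-1)^{(|y|+|z|)|w|}q_L([w,y]_L,z)$, so that the inner bracket $[w,y]_L\in[I,I]_L\subseteq I$ is again governed by closure. The difficulty is that after this rotation the element of $I$ is paired against the \emph{arbitrary} $z\in L$, whereas $I=I^\perp$ only annihilates pairings against elements of $I$. Closing this gap---promoting the bracket-closure $[I,I]_L\subseteq I$ to the relation $[y,z]_L\in I$ by combining the Lagrangian condition $I=I^\perp$ with invariance and super-cyclicity---is the heart of the lemma and the step demanding the most care; once it is secured, the displayed invariance computation yields $q_L([x,y]_L,z)=0$ for all $z$, hence $[x,y]_L=0$, completing the equivalence.
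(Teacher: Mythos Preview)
Your proposal misreads the statement. The lemma, as the paper actually proves it and as it is applied in the theorem that follows, concerns $I$ being a hom-Lie superalgebra \emph{ideal} of $L$; the word ``ideal'' has been dropped from the statement by a typo. With your subalgebra reading the forward implication is simply false. For a counterexample (with $\alpha=\mathrm{id}$ and purely even grading), take $L=\mathfrak{g}\oplus\mathfrak{g}$ for a simple Lie algebra $\mathfrak{g}$, with the hyperbolic form $q\bigl((x,y),(x',y')\bigr)=\kappa(x,x')-\kappa(y,y')$; the diagonal $I=\{(x,x):x\in\mathfrak{g}\}$ is a Lagrangian subalgebra isomorphic to $\mathfrak{g}$, hence not abelian. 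Your own analysis located precisely this obstruction: after the cyclic rotation you are left with $q_L([w,y]_L,z)$ where $[w,y]_L\in I$ but $z\in L$ is arbitrary, and nothing forces this to vanish. That gap cannot be ``secured'' from the subalgebra hypothesis alone, so the forward direction as written does not go through.

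With the intended hypothesis restored, both directions become one-line invariance computations, which is exactly the paper's argument. Forward: if $[L,I]\subseteq I$ then
\[
q_L\bigl(L,[I,I^\perp]\bigr)=q_L\bigl([L,I],I^\perp\bigr)\subseteq q_L(I,I^\perp)=0,
\]
so $[I,I]=[I,I^\perp]\subseteq L^\perp=0$. Backward: if $[I,I]=0$ then $q_L(I,[I,L])=q_L([I,I],L)=0$, whence $[I,L]\subseteq I^\perp=I$ and $I$ is an ideal. Your backward direction, which only concludes $[I,I]\subseteq I$, is correspondingly too weak for the intended statement.
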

\begin{proof}
Since dim$I$+dim$I^{\bot}=n/2+\dim I^{\bot}=n$ and $I\subseteq I^{\bot}$, we have $I=I^{\bot}$.

If $I$ is a hom-Lie superalgebra ideal of $(L,[\cdot,\cdot]_{L},\alpha)$, then $q_{L}(L,[I,I^{\bot}])=q_{L}([L,I],I^{\bot})\subseteq q_{L}(I,I^{\bot})=0$, which implies $[I,I]=[I,I^{\bot}]\subseteq L^{\bot}=0$.

Conversely, if $[I,I]=0$, then $f(I,[I,L])=f([I,I],L)=0$. Hence
$[I,L]\subseteq I^{\bot}=I$. This implies that $I$ is an ideal of
$(L,[\cdot,\cdot]_{L},\alpha)$.
\end{proof}

\begin{thm}
Let $(L,q_{L},\alpha)$ be a quadratic hom-Lie superalgebra of even dimension $n$ over a field $\K$ of characteristic not equal to two. Then $(L,q_{L},\alpha)$ is isometric to a $T^{*}$-extension $(T_{\omega}^{*}B,q_{B},\beta^{'})$ if and only if $n$ is even and $(L,[\cdot,\cdot]_{L},\alpha)$ contains an isotropic hom-Lie superalgebra ideal $I$ of dimension $n/2$. In particular, $B\cong L/I$.
\end{thm}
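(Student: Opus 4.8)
The plan is to establish the two implications separately; the reverse direction, which actually constructs the $T^{*}$-extension, carries all the substance and is the super-analogue of Bordemann's argument. For the forward direction, suppose $\phi\colon(L,q_{L},\alpha)\to(T_{\omega}^{*}B,q_{B},\beta')$ is an isometry of quadratic hom-Lie superalgebras. Inside $T_{\omega}^{*}B=B\oplus B^{*}$ the subspace $B^{*}$ is a graded abelian hom-Lie superalgebra ideal (as observed above), it has dimension $\dim B=n/2$, and it is isotropic for $q_{B}$ since $q_{B}$ vanishes on $B^{*}\times B^{*}$. Hence $I:=\phi^{-1}(B^{*})$ is an isotropic hom-Lie superalgebra ideal of $L$ with $\dim I=n/2$, and $L/I\cong(T_{\omega}^{*}B)/B^{*}\cong B$; in particular $n$ is even, as required.

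For the reverse direction, assume $I\subseteq L$ is an isotropic hom-Lie superalgebra ideal with $\dim I=n/2$. First I would note that $\dim I+\dim I^{\bot}=n$ together with $I\subseteq I^{\bot}$ forces $I=I^{\bot}$, and then Lemma~\ref{lemma3.1} gives that $I$ is abelian. Put $B:=L/I$; because $\alpha(I)\subseteq I$ and $[I,L]_{L}\subseteq I$, the bracket and $\alpha$ descend to a hom-Lie superalgebra structure $([\cdot,\cdot]_{B},\beta)$ on $B$. Next I would use $q_{L}$ to identify $I$ with $B^{*}$: writing $\pi\colon L\to L/I$ for the projection and using that $q_{L}$ is nondegenerate with $q_{L}(I,I)=0$, the rule $\theta(i)(\pi(x))=q_{L}(i,x)$ defines a graded linear map $\theta\colon I\to B^{*}=(L/I)^{*}$ that is well defined (since $q_{L}(I,I)=0$), injective (since $L^{\bot}=0$), and hence bijective by the dimension count $\dim I=\dim B^{*}$.

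The key and hardest step is to split $L$ compatibly. I would choose a graded linear complement $B_{0}$ of $I$ in $L$ that is simultaneously isotropic for $q_{L}$ and invariant under $\alpha$, so that the resulting section $s\colon B\to B_{0}\subseteq L$ satisfies $\pi\circ s=\mathrm{id}$, $q_{L}(s(\bar x),s(\bar y))=0$, and $\alpha\circ s=s\circ\beta$. The existence of an isotropic complement to the Lagrangian $I=I^{\bot}$ is exactly where the hypothesis $\operatorname{char}\K\neq2$ enters: one runs a Witt-type hyperbolic decomposition on the even and odd blocks of $q_{L}$ separately. Arranging this complement to be $\alpha$-stable at the same time is the main obstacle, and I expect the bulk of the work to lie here; it should be handled by performing the Witt decomposition $\alpha$-equivariantly (splitting first into $\alpha$-stable pieces and pairing dual ones), so that $s$ intertwines $\alpha$ and $\beta$.

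Finally I would transport the structure of $L$ to $B\oplus B^{*}$ through the graded linear isomorphism $\Phi\colon L\to B\oplus B^{*}$, $\Phi(s(\bar x)+i)=\bar x+\theta(i)$, and read off the cocycle $\omega\colon B\times B\to B^{*}$ as the obstruction of $s$ to being a homomorphism, namely $\omega(\bar x,\bar y)=\theta$ of the $I$-component of $[s(\bar x),s(\bar y)]_{L}$. A direct computation, using the invariance and supersymmetry of $q_{L}$, the abelianness of $I$, and the defining identity of the coadjoint representation $\pi$, shows that $\Phi$ carries $[\cdot,\cdot]_{L}$ to the defining bracket of $T_{\omega}^{*}B$ with this $\omega$ and carries $\alpha$ to $\beta'$, the latter exactly using $\alpha\circ s=s\circ\beta$ together with the $\alpha$-compatibility of $\theta$. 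The invariance of $q_{L}$ forces $\omega$ to be supercyclic in the sense of Lemma~\ref{lemma3.2}, while the hom-superJacobi identity on $L$ forces $\omega\in Z^{2}(B,B^{*})_{\bar 0}$; these are precisely the conditions guaranteeing that $(T_{\omega}^{*}B,q_{B},\beta')$ is a quadratic hom-Lie superalgebra. The isometry identity $q_{L}=q_{B}\circ(\Phi\times\Phi)$ then follows from a short check using $q_{L}(B_{0},B_{0})=0$, $q_{L}(I,I)=0$, and the definition of $\theta$. This produces the required isometry, with $B\cong L/I$.
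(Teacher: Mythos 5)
Your proposal retraces the paper's argument almost step for step. The forward direction is the same: $B^{*}$ is an isotropic abelian ideal of half the dimension of $T^{*}_{\omega}B$, pulled back through the isometry. In the converse, the paper likewise deduces $I=I^{\perp}$ and abelianness from Lemma~\ref{lemma3.1}, sets $B=L/I$, identifies $I$ with $B^{*}$ by exactly your map $\theta$ (called $q_{L}^{*}$ there), chooses an isotropic complement $B_{0}$ using $\ch\K\neq 2$, defines $\omega$ as the image under $q_{L}^{*}$ of the $I$-component of the bracket of lifts, and transports everything through $\varphi(b_{0}+i)=p(b_{0})+q_{L}^{*}(i)$, with supercyclicity of $\omega$ coming from the invariance of the form (Lemma~\ref{lemma3.2}). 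Up to notation, identical.

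The one place you go beyond the paper is the step you yourself identify as the crux, and it is a genuine gap: you assert, without proof, that the isotropic complement $B_{0}$ can be chosen $\alpha$-stable with $\alpha\circ s=s\circ\beta$, and that $\theta$ is $\alpha$-compatible, i.e. $\theta(\alpha(i))=\theta(i)\circ\beta$. The latter is literally the identity $q_{L}(\alpha(i),x)=q_{L}(i,\alpha(x))$, that is, self-adjointness of $\alpha$ with respect to $q_{L}$, which is nowhere among the hypotheses (Definition 4.3 imposes no compatibility between $\alpha$ and the form, and Definition 2.1(6) does not even require $\alpha(I)\subseteq I$ for ideals). Worse, the $\alpha$-equivariant Witt decomposition you invoke can simply fail to exist: take $L=\K e\oplus\K f$ purely even and abelian, $q_{L}(e,f)=1$ hyperbolic, $\alpha(e)=e$, $\alpha(f)=e+f$. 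This satisfies all the paper's hypotheses, $I=\K e$ is an isotropic ideal of dimension $n/2$ with $\alpha(I)=I$, yet $\alpha$ is a single unipotent Jordan block, so the only $\alpha$-stable line is $\K e$ itself and $I$ admits no $\alpha$-stable complement whatsoever, isotropic or not (note $\alpha$ is even self-adjoint here, so self-adjointness alone would not rescue the construction; one would need something like semisimplicity of $\alpha$ as well). To be fair, the paper's own proof does not solve this difficulty either --- it takes an arbitrary isotropic complement and never verifies $\varphi\circ\alpha=\beta^{'}\circ\varphi$, which in the example above actually fails --- but whereas the paper silently skips the twist check, your proposal hangs its validity on an equivariant splitting that is unavailable in general, so as written the key step is a sketch of something false rather than a proof.
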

\begin{proof}
($\Longrightarrow$) Since dim$B$=dim$B^{*}$, dim$T^{*}_{\omega}B$ is even. Moreover, it is clear that $B^{*}$ is a hom-Lie superalgebra ideal of half the dimension of $T^{*}_{\omega}B$ and by the definition of $q_{B}$, we have $q_B(B^*,B^*)=0$, i.e., $B^*\subseteq (B^*)^\bot$ and so $B^*$ is isotropic.

($\Longleftarrow$) Suppose that $I$ is an $n/2$-dimensional isotropic hom-Lie superalgebra ideal of $L$. By Lemma \ref{lemma3.1},  $I$ is abelian. Let $B=L/I$ and $p: L \rightarrow B$ be the canonical projection. Clearly, $|p(x)|=|x|, \forall x\in L$. Since $\ch \K\neq2$,  we can choose an isotropic complement subspace $B_{0}$ to $I$ in $L$, i.e., $L=B_{0}\dotplus I$ and $B_{0}\subseteq B_{0}^{\bot}$. Then $B_{0}^{\bot}=B_{0}$ since dim$B_0=n/2$.

Denote by $p_{0}$ (resp. $p_{1}$) the projection $L \rightarrow B_{0}$ (resp. $L\rightarrow I$) and let $q_{L}^{*}$ denote the homogeneous linear mapping $I \rightarrow B^{*}: i \mapsto q_{L}^{*}(i)$, where $q_{L}^{*}(i)(p(x)):= q_{L}(i,x), \forall x\in L$. We claim that $q_{L}^{*}$ is a linear isomorphism. In fact, if $p(x)=p(y)$, then $x-y\in I$, hence $q_{L}(i,x-y)\in q_{L}(I,I)=0$ and so $q_{L}(i,x)=q_{L}(i,y)$, which implies $q_{L}^{*}$ is well-defined and it is easily seen that $q_{L}^{*}$ is linear. If $q_{L}^{*}(i)=q_{L}^{*}(j)$, then $q_{L}^{*}(i)(p(x))=q_{L}^{*}(j)(p(x)), \forall x\in L$, i.e., $q_{L}(i,x)=q_{L}(j,x)$, which implies $i-j\in L^\bot=0$, hence $q_{L}^{*}$ is injective. Note that $\dim I=\dim B^*$, then $q_{L}^{*}$ is surjective.

In addition, $q_{L}^{*}$ has the following property:
\begin{eqnarray*}
q_{L}^{*}([x,i])(p(y))&=&q_{L}([x,i]_{L},y)=-(-1)^{|x||i|}q_{L}([i,x]_{L},y)=-(-1)^{|x||i|}q_{L}(i,[x,y]_{L})\\
&=&-(-1)^{|x||i|}q_{L}^{*}(i)p([x,y]_{L})=-(-1)^{|x||i|}q_{L}^{*}(i)[p(x),p(y)]_{L}\\
&=&-(-1)^{|x||i|}q_{L}^{*}(i)(\ad p(x)(p(y)))=(\pi(p(x))q_{L}^{*}(i))(p(y))\\
&=&[p(x),q_{L}^{*}(i)]_{L\oplus L^{*}}(p(y)).
\end{eqnarray*}
where $x,y\in L$, $i\in I$. A similar computation shows that
$$q_{L}^{*}([x,i])=[p(x),q_{L}^{*}(i)]_{L\oplus L^{*}},\quad q_{L}^{*}([i,x])=[q_{L}^{*}(i),p(x)]_{L\oplus L^{*}}.$$

Define a homogeneous bilinear mapping
\begin{eqnarray*}
\omega:~~~~~ B\times B~~~~&\longrightarrow&B^{*}\\
(p(b_0),p(b_0'))&\longmapsto&q_{L}^{*}(p_{1}([b_0,b_0'])),
\end{eqnarray*}
where $b_0,b_0'\in B_{0}.$ Then $|w|=0$ and $w$ is well-defined since the restriction of the projection $p$ to $B_{0}$ is a linear isomorphism.

Now, define the bracket on $B\oplus B^{*}$ by (12), we have $B\oplus B^{*}$ is a $Z_{2}$-graded algebra. Let $\varphi$ be the linear mapping $L \rightarrow B\oplus B^{*}$ defined by $\varphi(b_{0}+i)=p(b_{0})+q_{L}^{*}(i), \forall b_0+i\in B_0\dotplus I=L. $
 Since the  restriction of $p$  to $B_{0}$ and $q_{L}^{*}$ are linear isomorphisms, $\varphi$ is also a linear isomorphism. Note that
\begin{eqnarray*}
&&\varphi([b_0+i,b_0'+i']_{L})=\varphi([b_0,b_0']_{L}+[b_0,i']_{L}+[i,b_0']_{L})\\
&=&\varphi(p_{0}([b_0,b_0']_{L})+p_{1}([b_0,b_0']_{L})+[b_0,i']_{L}+[i,b_0']_{L})\\
&=&p(p_{0}([b_0,b_0']_{L}))+q_{L}^{*}(p_{1}([b_0,b_0']_{L})+[b_0,i']_{L}+[i,b_0']_{L})\\
&=&[p(b_0),p(b_0')]_{L}+\omega(p(b_0),p(b_0'))+[p(b_0),q_{L}^{*}(i')]_{L}+[q_{L}^{*}(i),p(b_0')]_{L}\\
&=&[p(b_0),p(b_0')]_{L}+\omega(p(b_0),p(b_0'))+\pi(p(b_0)(q_{L}^{*}(i'))-(-1)^{|b_0||b_0'|}\pi(p(b_0')(q_{L}^{*}(i))\\
&=&[p(b_0)+q_{L}^{*}(i),p(b_0')+q_{L}^{*}(i')]_{B\oplus B^{*}}\\
&=&[\varphi(b_0+i),\varphi(b_0'+i')]_{L\oplus L^{*}}.
\end{eqnarray*}
Then $\varphi$ is an isomorphism of $Z_{2}$-graded algebras, and so $(B\oplus B^{*},[\cdot,\cdot]_{B\oplus B^{*}},\beta)$ is a hom-Lie superalgebra.
Furthermore, we have
{\setlength\arraycolsep{2pt}
\begin{eqnarray*}
q_{B}(\varphi(b_{0}+i),\varphi(b_{0}'+i'))&=&q_{B}(p(b_{0})+q_{L}^{*}(i),p(b_{0}')+q_{L}^{*}(i'))\\
&=&q_{L}^{*}(i)(p(b_{0}'))+(-1)^{|b_0||b_0'|}q_{L}^{*}(i')(p(b_{0}))\\
&=&q_{L}(i,b_{0}')+(-1)^{|b_0||b_0'|}q_{L}(i',b_{0})\\
&=&q_{L}(b_{0}+i,b_{0}'+i'),
\end{eqnarray*}}
then $\varphi$ is isometric. The relation
\begin{eqnarray*}
q_{B}([\varphi(x),\varphi(y)],\varphi(z))&=&q_B(\varphi([x,y]),\varphi(z))\\
&=&q_{L}([x,y],z)=q_{L}(x,[y,z])=q_{B}(\varphi(x),[\varphi(y),\varphi(z)])
\end{eqnarray*}
implies that $q_B$ is a nondegenerate invariant supersymmetric bilinear form, and so $(B\oplus B^{*}, q_B,\beta^{'})$ is a quadratic hom-Lie superalgebra.
In this way, we get a $T^*$-extension $T^{*}_{\omega}B$ of $B$ and consequently, $(L,q_{L},\alpha)$ and $(T^{*}_{\omega}B,q_{B},\beta^{'})$ are isometric as required.
\end{proof}

The proof of Theorem 4.10 shows that the homogeneous bilinear
mapping $\omega$ depends on the choice of the isotropic subspace
$B_0$ of $L$ complement to the hom-Lie superalgebra ideal $I$.
Therefore there may be different $T^*$-extensions describing the
``same'' quadratic hom-Lie superalgebras.

Let$(L,[\cdot,\cdot]_{L},\alpha)$ be a hom-Lie superalgebra over a field $\K$. and let $\omega_{1}: L\times L\rightarrow L^{*}$ and $\omega_{2}: L \times L\rightarrow L^{*}$ be two different supercyclic 2-cocycles satisfying $|\omega_1|=|\omega_2|=0$. The $T^{*}$-extensions $T^{*}_{\omega_{1}}L$ and $T^{*}_{w_{2}}L$ of $L$ are said to be {\bf equivalent}  if there exists an isomorphism of hom-Lie superalgebra $\phi: T^{*}_{\omega_{1}}L\rightarrow  T^{*}_{\omega_{2}}L$ which is the identity on the hom-Lie superalgebra ideal $L^{*}$ and which induces the identity on the factor hom-Lie superalgebra algebra $T^{*}_{\omega_{1}}L/L^{*}\cong L\cong T^{*}_{\omega_{2}}L/L^{*}.$ The two $T^{*}$-extensions $T^{*}_{\omega_{1}}L$ and $T^{*}_{\omega_{2}}L$ are said to be {\bf isometrically equivalent} if  they are equivalent and $\phi$ is an isometry.
\begin{prop}\label{proposition3.1}
Let $L$ be a hom-Lie superalgebra over a field $\K$ of characteristic not equal to 2, and $\omega_{1}$, $\omega_{2}$ be two homcyclic 2-cocycles $L\times L\rightarrow L^{*}$ satisfying $|\omega_i|=0$. Then we have
\begin{enumerate}[(i)]
   \item  $T^{*}_{\omega_{1}}L$ is equivalent to  $T^{*}_{\omega_{2}}L$ if and only if there is $z\in C^1(L, L^{*})_{\bar{0}}$ such that
    \begin{equation}\label{equation3.1}
       \omega_{1}(x, y)-\omega_{2}(x, y)=\pi(x)z(y)-(-1)^{|x||y|}\pi(y)z(x)-z([x,y]_{L}),  \forall x, y\in L.
    \end{equation}

        If this is the case, then the supersymmetric part $z_{s}$ of $z$, defined by $z_{s}(x)(y):=\frac{1}{2}(z(x)(y)+(-1)^{|x||y|}z(y)(x))$,
        for all $x,y\in L$, induces a supersymmetric invariant bilinear form on $L$.
   \item  $T_{\omega_{1}}^{*}L$ is isometrically equivalent to $T_{\omega_{2}}^{*}L$ if and only if there is $z\in C^1(L, L^{*})_{\bar{0}}$ such that $(\ref{equation3.1})$ holds for all ~$x,y\in L$ and the supersymmetric part $z_{s}$ of $z$ vanishes.
\end{enumerate}
\end{prop}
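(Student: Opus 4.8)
The plan is to represent every admissible $\phi$ explicitly and thereby reduce both notions of equivalence to algebraic identities in a single $1$-cochain $z$. First I would pin down the shape of an equivalence: since $\phi\colon T^*_{\omega_1}L\to T^*_{\omega_2}L$ is the identity on $L^*$ and induces the identity on $(L\oplus L^*)/L^*\cong L$, it must have the form $\phi(x+f)=x+f+z(x)$ for a unique even linear map $z\colon L\to L^*$; conversely any even $z$ produces a grading-preserving linear bijection fixing $L^*$ and inducing the identity on $L$ (its inverse being $x+f\mapsto x+f-z(x)$). Hence the entire content of ``equivalence'' is that this $\phi$ respects the bracket and the structure map $\alpha^{'}$.

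For part (i) I would simply expand the morphism condition. Writing $[\cdot,\cdot]_{\omega_i}$ for the bracket of $T^*_{\omega_i}L$, one gets
$$\phi([x+f,y+g]_{\omega_1})=[x,y]_L+z([x,y]_L)+\omega_1(x,y)+\pi(x)g-(-1)^{|x||y|}\pi(y)f,$$
$$[\phi(x+f),\phi(y+g)]_{\omega_2}=[x,y]_L+\omega_2(x,y)+\pi(x)(g+z(y))-(-1)^{|x||y|}\pi(y)(f+z(x)).$$
The $L$-components agree automatically, and after cancelling the common $\pi(x)g$ and $\pi(y)f$ terms, equality of the $L^*$-components is exactly identity $(\ref{equation3.1})$. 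Separately, $\phi\circ\alpha^{'}=\alpha^{'}\circ\phi$ unwinds to $z(\alpha(x))=z(x)\circ\alpha$, the hom-cochain compatibility of $z$ with the structure maps, which I would record alongside $(\ref{equation3.1})$. Both implications of (i) thus fall out at once: $\phi$ is an equivalence precisely when such a $z$ exists.

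The cleanest route to the invariant form is to pull back $q_L$ rather than grind through the supercyclicity of $\omega_1-\omega_2$. Since $q_L$ (Lemma \ref{lemma3.2}) is invariant on $T^*_{\omega_2}L$ and $\phi$ is a hom-Lie superalgebra isomorphism, the pulled-back form $q_L^{\phi}(u,v):=q_L(\phi(u),\phi(v))$ is an invariant bilinear form on $T^*_{\omega_1}L$; and $q_L$ itself is invariant there, so their difference is invariant. A direct evaluation gives
$$q_L(\phi(x+f),\phi(y+g))-q_L(x+f,y+g)=z(x)(y)+(-1)^{|x||y|}z(y)(x)=2\,z_s(x)(y),$$
which depends only on the $L$-parts $x,y$. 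Restricting the invariance of this difference to pure elements $x,y,w\in L$, where the $L$-part of $[x,y]_{\omega_1}$ is $[x,y]_L$, yields $z_s([x,y]_L)(w)=z_s(x)([y,w]_L)$; that is, $\Phi(x,y):=z_s(x)(y)$ is invariant on $L$. Supersymmetry of $\Phi$ is immediate from the symmetrisation defining $z_s$, since $(-1)^{|x||y|}\Phi(y,x)=\Phi(x,y)$. This proves the final assertion of (i).

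Finally, part (ii) follows from the very same displayed identity: $q_L(\phi(u),\phi(v))=q_L(u,v)$ for all $u,v$ holds if and only if $z_s(x)(y)=0$ for all $x,y$, i.e. $z_s=0$. Combining this with (i) shows that $T^*_{\omega_1}L$ and $T^*_{\omega_2}L$ are isometrically equivalent exactly when a $z$ satisfying $(\ref{equation3.1})$ exists with vanishing supersymmetric part. The fiddly part of the whole argument is the sign bookkeeping in expanding the two brackets and in checking that the $\alpha^{'}$-condition reduces to $z\circ\alpha=z(\cdot)\circ\alpha$; the one genuinely structural step — that the defect $2z_s$ is an invariant form — is obtained for free by pulling back $q_L$, so I do not expect a serious obstacle beyond careful computation.
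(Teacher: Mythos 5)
Your proof is correct, and on the main equivalences it coincides with the paper's argument: like the paper, you observe that any equivalence must have the form $\Phi(x+f)=x+z(x)+f$ for a unique even $z:L\to L^{*}$, expand $\Phi([x+f,y+g])$ against $[\Phi(x+f),\Phi(y+g)]$, cancel the common terms, and read off that the morphism condition on brackets is exactly $(\ref{equation3.1})$; and for (ii) you compute, just as the paper does, $q_L(\Phi(x+f),\Phi(y+g))-q_L(x+f,y+g)=2z_s(x)(y)$, so $\Phi$ is an isometry iff $z_s=0$. Where you genuinely diverge is the invariance of $z_s$. The paper proves it computationally: it evaluates $(\omega_1-\omega_2)(x,y)(m)$ and $(-1)^{|x|(|y|+|m|)}(\omega_1-\omega_2)(y,m)(x)$ via $(\ref{equation3.1})$, equates the right-hand sides using supercyclicity of $\omega_1$ and $\omega_2$, and divides by $2$. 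You instead pull back $q_L$ along $\Phi$, note that the difference of the two invariant forms on $T^{*}_{\omega_1}L$ is again invariant and equals $2z_s$ on the $L$-parts, and restrict to $L$. This is a cleaner, essentially sign-free structural argument; note, though, that it consumes exactly the same hypotheses as the paper's computation, since the invariance of $q_L$ on \emph{both} extensions is Lemma \ref{lemma3.2} and hence requires both cocycles to be supercyclic, and it additionally uses the bracket-morphism property of $\Phi$, which is legitimate because that property is equivalent to $(\ref{equation3.1})$. One further point in your favour: you record explicitly that $\Phi\circ\alpha^{'}=\alpha^{'}\circ\Phi$ unwinds to $z(\alpha(x))=z(x)\circ\alpha$, a compatibility that is not encoded in the bare condition $z\in C^{1}(L,L^{*})_{\bar{0}}$ and that the paper passes over with ``it is easy to prove that $\Phi$ is an isomorphism''; strictly speaking this condition must be imposed (or verified) for $\Phi$ to be an isomorphism of hom-Lie superalgebras in the sense of Definition 2.3, so your bookkeeping closes a small gap in the paper's converse direction.
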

\begin{proof}
(i) $T^{*}_{\omega_{1}}L$ is equivalent to  $T^{*}_{\omega_{2}}L$ if and only if there is an isomorphism of hom-Lie superalgebra $\Phi: T_{\omega_{1}}^{*}L\rightarrow T_{\omega_{2}}^{*}L$ satisfying $\Phi|_{L^*}=1_{L^*}$ and $x-\Phi(x)\in L^*, \forall x\in L$.

Suppose that $\Phi: T_{\omega_{1}}^{*}L\rightarrow T_{\omega_{2}}^{*}L$ is an isomorphism of hom-Lie superalgebra and define a linear mapping $z: L\rightarrow L^{*}$ by $z(x):=\Phi(x)-x$, then $z\in C^1(L, L^{*})_{\bar{0}}$ and for all $x+f,y+g\in T^{*}_{\omega_{1}}L$, we have
\begin{eqnarray*}
&&\Phi([x+f,y+g]_{\Omega})\\
&=&\Phi([x,y]_{L}+\omega_{1}(x,y)+\pi(x)g-(-1)^{|x||y|}\pi(y)f)\\
&=&[x,y]_{L}+z([x,y]_{L})+\omega_{1}(x,y)+\pi(x)g-(-1)^{|x||y|}\pi(y)f.
\end{eqnarray*}
On the other hand,
\begin{eqnarray*}
&&[\Phi(x+f),\Phi(y+g)]\\
&=&[x+z(x)+f,y+z(y)+g]\\
&=&[x,y]_{L}+\omega_2(x,y)+\pi(x)g+\pi(x)z(y)-(-1)^{|x||y|}\pi(y)z(x)-(-1)^{|x||y|}\pi(y)f.
\end{eqnarray*}
Since $\Phi$ is an isomorphism, (14) holds.

Conversely, if there exists $z\in C^{1}(L, L^{*})_{\bar{0}}$ satisfying (12), then we can define $\Phi: T_{\omega_{1}}^{*}L\rightarrow T_{\omega_{2}}^{*}L$ by $\Phi(x+f):=x+z(x)+f$. It is easy to prove that $\Phi$ is an isomorphism of hom-Lie superalgebra such that $\Phi|_{L^*}=1_{L^*}$ and $x-\Phi(L)\in L^*, \forall x\in L$, i.e., $T^{*}_{\omega_{1}}L$ is equivalent to  $T^{*}_{\omega_{2}}L$.

Consider the supersymmetric bilinear form $q_{L}: L\times L\rightarrow \K, (x,y)\mapsto z_s(x)(y)$ induced by $z_s$. Note that
\begin{eqnarray*}
&&\omega_{1}(x, y)(m)-\omega_{2}(x, y)(m)\\
&=&\pi(x)z(y)(m)-(-1)^{|x||y|}\pi(y)z(x)(m)-z([x,y]_{L})(m)\\
&=&(-1)^{|x||y|}z(y)([x,m]_{L})+z(x)([y,m]_{L})-z([x,y]_{L})(m)
\end{eqnarray*}
and
\begin{eqnarray*}
&&(-1)^{|x|(|y|+|m|)}(\omega_{1}(y,m)(x)-\omega_{2}(y,m)(x))\\
&=&(-1)^{|x|(|y|+|m|)}(\pi(y)z(m)(x)-(-1)^{|y||m|}\pi(m)z(y)(x)-z([y,m]_{L})(x))\\
&=&(-1)^{|m|(|x|+|y|)}z(m)([x,y]_{L})+(-1)^{|x||y|}z(y)([x,m]_{L})-(-1)^{|x|(|y|+|m|)}z([y,m]_{L})(x)
\end{eqnarray*}
Since both $\omega_{1}$ and $\omega_{2}$ are homcylic, the right hand sides of above two equations are equal. Hence
\begin{eqnarray*}
&&(-1)^{|x||y|}z(y)([x,m]_{L})+z(x)([y,m]_{L})-z([x,y]_{L})(m)\\
&=&(-1)^{|m|(|x|+|y|)}z(m)([x,y]_{L})+(-1)^{|x||y|}z(y)([x,m]_{L})-(-1)^{|x|(|y|+|m|)}z([y,m]_{L})(x).
\end{eqnarray*}
That is
\begin{eqnarray*}
z(x)([y,m]_{L})+(-1)^{|x|(|y|+|m|)}z([y,m]_{L})(x)=z([x,y]_{L})(m)+(-1)^{|m|(|x|+|y|)}z(m)([x,y]_{L}).
\end{eqnarray*}
Since ch$\K\neq2$, $q_{L}(x,[y,m])=q_{L}([x,y],m)$, which proves the invariance of the supersymmetric bilinear form $q_{L}$ induced by $z_s$.

(ii) Let the isomorphism $\Phi$ be defined as in (i). Then for all $x+f, y+g\in L\oplus L^{*}$, we have
\begin{eqnarray*}
&&q_{B}(\Phi(x+f),\Phi(y+g))=q_{B}(x+z(x)+f,y+z(y)+g)\\
&=&z(x)(y)+f(y)+(-1)^{|x||y|}(z(y)(x)+g(x))\\
&=&z(x)(y)+(-1)^{|x||y|}z(y)(x)+f(y)+(-1)^{|x||y|}g(x)\\
&=&2z_s(x)(y)+q_{B}(x+f, y+g).
\end{eqnarray*}
Thus $\Phi$ is an isometry if and only if $z_{s}=0$.
\end{proof}

\end{document}